\newtheorem{theorem}{Theorem}[section]
\theoremstyle{plain}
\newtheorem{corollary}[theorem]{Corollary}
\newtheorem{example}[theorem]{Example}
\newtheorem{lemma}[theorem]{Lemma}
\newtheorem{prop}[theorem]{Proposition}
\newtheorem{remark}[theorem]{Remark}
\numberwithin{equation}{section}
\def\nn{\nonumber}
\def\nse{\mathrm{nse}}
\def\bexp{\mathrm{e}_\beta}
\def\pexp{\mathrm{e}}
\def\blog{\log_\beta}
\def\W{W}
\def\tn#1{\frac{t^#1}{#1!}}
\def\sone#1#2{\begin{bmatrix}#1 \\ #2 \end{bmatrix}}
\def\stwo#1#2{\left\{\begin{matrix}#1 \\ #2 \end{matrix}\right\}}
\def\pa{\alpha}
\def\pb{\beta}
\def\pc{\lambda}
\def\pd{\mu}
\def\Uxp#1#2{U_{#1}(x|#2)}
\def\pv{\mathbf{v}}
\def\Ffun{F}
\def\part#1#2{\frac{\partial#1}{\partial #2}}
\def\frdc{\left(\frac{\pd}{\pc}\right)}
\def\Uc#1#2#3{\mathcal{S}_{#1,#2}(#3)}
\def\nsb{\mathrm{nsb}}
\def\nse{\mathrm{nse}}
\def\rlb{\mathrm{rlb}}
\def\rle{\mathrm{rle}}
\def\Sfun{\mathbb{S}}
\def\LLP{\mathrm{LLP}}
\title{Unified generating function for set partitions}
\begin{document}
\author[O.~Herscovici]{Orli Herscovici}
\address{O.~Herscovici\\School of Mathematics, 
Georgia Institute of Technology,
686 Cherry St NW, Atlanta, GA 30332}
\email{orli.herscovici@gmail.com}
\maketitle

\begin{abstract}
In this work we define a unified generating functions for 9 different kinds of set partitions including cyclically ordered set partitions. Such generating function depends on 4 parameters. We consider property of this function and provide combinatorial explanation for polynomials generated by this function. Two new combinatorial statistics are defined and the explicit formulae given for coefficients of parametrized polynomials defined by the generating function.
\end{abstract}

\noindent{\sc Keywords:} set partitions statistics; Stirling numbers; degenerate exponential function;  permutations; cyclically ordered partitions; nonextensive statistical mechanics;

\noindent{\sc 2010 MSC:}
05A10; 05A15; 05A18; 05A30; 11B73; 

\section{Introduction}
A set partition problem has a very long history (for nice historical survey see for example the book of Mansour \cite{Mansour2013}). It is known that the number of partitions of a set $[n]$ into $k$ non-empty disjoint blocks is given by the Stirling numbers of the second kind $S(n,k)$. Many different statistics were proposed to study the nature of set partitions \cite{Dahlberg2016,  Mansour2013, Yano2007}.
Applications $q$-calculus \cite{Kac2002} to the theory of set partitions led to generalization of the Stirling numbers to the $q$-Stirling numbers $S_q(n,k)$ (see for example \cite{Simsek2012} and reference therein) and to  
new statistics on set partitions \cite{Chen2019, Kasraoui2009, Mendez2008, Park1994, Steingrimsson2020, White1994}. There exist other generalizations of Stirling numbers, but not all of them have a combinatorial interpretation (see for example \cite{Komatsu2018, Lang2000, Maltenfort2020, Mansour2012}). 
Another generalization was done in work with Mansour \cite{Herscovici2017}  and considered further in \cite{Herscovici2019}. It provided a single generating function for four kinds of set partitions (namely, sets of sets, sets of lists, lists of sets, and lists of lists \cite{Herscovici2017, Herscovici2019, Motzkin1971}). 

That generalization was based on replacement the exponential function by its deformed analogue $\text{exp}_q(t)=(1+(1-q)t)^\frac{1}{1-q}$ proposed by Tsallis in 1988 \cite{Tsallis1988, Tsallis1994} based on the experimental data.  This function is intimately close to the degenerate exponential function $(1+\lambda x)^\frac{1}{\lambda}$ of Carlitz proposed in 1956 \cite{Carlitz1956}.  The applications of the deformed exponential function of Tsallis found their applications in nonextensive statistical mechanics \cite{Borges1998, Lobao2009, Tsallis2009}, while degenerate exponential function of Carlitz mostly appears in deformations of generating functions arising in number theory \cite{Carlitz1979, Cenkci2007, Qi2018, Wu2014, Young2008}. 
Another difference between two functions is that deformed exponential function of Tsallis led to another kind of $q$-calculus  \cite{Borges1998, Lobao2009}.

In personal communication with Alan Sokal, he asked the following question: what happens if blocks or elements are cyclically ordered? Is there a single generating function describing not four, but nine kinds of set partitions? This work presents an affirmative answer on the last question. More interesting is that the obtained function is very similar to the function for four kinds of set partitions \cite{Herscovici2017} and the coefficients of this function are the same as for function describing the four previous set partitions \cite{Herscovici2019}.
The combinatorial part demands introducing of two complementary statistics to those defined in \cite{Herscovici2017}.

This paper is organized as following. In Section \ref{background} we present some basic notations. In Section \ref{ugf-sec} we introduce the unified generating function and study its properties. Section \ref{Section-Comb} considers the combinatorial interpretation of the unified generating function. Finally, Section \ref{sec-app} shows the applications of our function.

\section{Background}\label{background}

\subsection{Degenerate exponential and logarithmic functions}
Due to combinatorial nature of the objects considered here, we will use the following notations in this work. A degenerate exponential function \cite{Carlitz1956} will be denoted as
\begin{align}
\bexp(t)=(1-\beta t)^{-\frac{1}{\beta}}=\sum_{n=0}^\infty\W_n(\beta)\frac{t^n}{n!}, \label{GF:bexp}
\end{align}
where 
\begin{align}
\W_n(\beta)=\left\{\begin{matrix}
\prod_{m=1}^{n-1}(m\beta+1), & n\geq 2,\\
1, & n\in\{0,1\},\end{matrix}\right.\label{Wn}
\end{align}
(see \cite{Borges1998, Carlitz1979}).
\begin{remark} \label{p1911-rmk1}
Obviously, the coefficients $\W_n(\beta)$ are polynomials of degree $n-1$ in a variable $\beta$.
\end{remark}
It is easy to see that
\begin{align}
\frac{d}{dt}\bexp(t)=(1-\beta t)^{-\frac{1}{\beta}
-1}=\frac{\bexp(t)}{1-\beta t}.\label{bexp-deriv}
\end{align}
A degenerate logarithmic function (see also \cite{Tsallis1988}) corresponding to \eqref{GF:bexp} is therefore
\begin{align}
\blog(t)=\frac{t^{-\beta}-1}{-\beta}.\label{GF:blog}
\end{align}
However, in all generating functions related to set partitions, appearing in the Table \ref{longtable} at the end of this work, we actually need not a $\blog(t)$, but rather a $-\blog(1-t)$. So let's consider $-\blog(1-t)$. 

From \eqref{GF:blog} we obtain
\begin{align}
-\blog(1-t)&=-\frac{(1-t)^{-\beta}-1}{-\beta}=
\frac{1}{\beta}\, \pexp_{\frac{1}{\beta}}(\beta t)-\frac{1}{\beta}.\label{GF:logexp}
\end{align}
Let us emphasize that this function approaches $-\log(1-t)$ as $\beta\rightarrow0$, while function $\pexp_\beta(t)$ from \eqref{GF:bexp} approaches $\pexp^t$ as $\beta\rightarrow0$. Here, $\log t$ stands for the natural logarithm. Therefore let us consider a function 
\begin{align}
\pexp_\frac{\beta}{\alpha}(\alpha t)=\left(1-\frac{\beta}{\alpha}\cdot\alpha t\right)^{-\frac{\alpha}{\beta}}=\sum_{n=0}^\infty\W_n\left(\frac{\beta}{\alpha}\right)\frac{\alpha^nt^n}{n!},\label{GF:eab}
\end{align}
such that 
\begin{align}
\alpha\cdot\beta=0\implies ((\alpha=0)\text{ and }(\beta\neq 0))\text{ OR }((\beta=0)\text{ and }(\alpha\neq0)),\label{eab:cond}
\end{align}
i.e. parameters $\pa,\pb$ can not be equal  zero simultaneously. 
Considering the degenerate exponential function of kind \eqref{GF:eab}, the equality of any of its parameters, i.e. $\pa$ (or $\pb$) to the zero  should be addressed as a limit of $\alpha\rightarrow 0$ ($\pb\rightarrow 0$, respectively). It follows from \eqref{GF:eab} that $\pexp_\frac{\beta}{\alpha}(\alpha t)\Big|_{t=0}=1$ while from  \eqref{GF:logexp} we get $-\blog(1-t)\Big|_{t=0}=0$. 
We are ready to define a unified generating function.

\section{Unified generating function} \label{ugf-sec}

Based on similarities and differences in behaviours of functions $-\blog(1-t)$ and $\bexp(t)$, we construct the following function

\begin{align}
\Ffun(x,t;\pa,\pb,\pc,\pd)
=\frac{1}{\pa}\pexp_\frac{\pb}{\pa}\left[\pa x\left(\frac{1}{\pc}\pexp_\frac{\pd}{\pc}(\pc t)-\frac{1}{\pc}\right)\right]-\frac{1}{\pa}+\frac{\delta_{\pa,1}}{\pa},\label{Ffun}
\end{align}
where $\delta_{n,m}$ is the Kronecker's symbol and each pair of parameters ($\pa$, $\pb$) and ($\pc$, 
$\pd$) satisfy the condition \eqref{eab:cond}. We'll show that the function \eqref{Ffun} generates polynomials $\Uxp{n}{\pa,\pb,\pc,\pd}$ in variables $x$, $\pa$, $\pb$, $\pc$, and $\pd$. For simplicity of notations, let us denote by $\pv$ a vector of variables $\pa$, $\pb$, $\pc$, and $\pd$, that is $\pv:=(\pa,\pb,\pc,\pd)$ and, respectively $\Uxp{n}{\pv}:=\Uxp{n}{{\pa,\pb,\pc,\pd}}$. Thus we have
\begin{align}
\Ffun(x,t;\pv)=\sum_{n=0}^\infty\Uxp{n}{\pv}\tn{n}.\label{GF:UGF}
\end{align}

\begin{lemma} \label{lemma1}
We have $\Uxp{0}{{\pa,\pb,\pc,\pd}}=\frac{\delta_{\pa,1}}{\pa}$.
\end{lemma}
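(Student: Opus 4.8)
The plan is to recognize that $\Uxp{0}{\pv}$ is nothing but the value of the generating function at $t=0$. Indeed, setting $t=0$ in the expansion \eqref{GF:UGF}, every term with $n\geq 1$ carries a factor $t^n$ and vanishes, so that $\Uxp{0}{\pv}=\Ffun(x,0;\pv)$. The whole task therefore reduces to evaluating the closed form \eqref{Ffun} at $t=0$, which is a direct substitution.

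First I would treat the inner factor. The expression $\frac{1}{\pc}\pexp_\frac{\pd}{\pc}(\pc t)-\frac{1}{\pc}$ appearing in the bracket of \eqref{Ffun} is exactly the degenerate-logarithm-type function $-\blog(1-t)$ built from the pair $(\pc,\pd)$, whose value at the origin is $0$; this is the observation recorded at the end of Section~\ref{background}, and it follows immediately from $\pexp_\frac{\pd}{\pc}(\pc t)\big|_{t=0}=1$ by \eqref{GF:eab}. Consequently the argument of the outer degenerate exponential collapses to $\pa x\cdot 0=0$. Applying \eqref{GF:eab} once more, this time to the outer function, gives $\pexp_\frac{\pb}{\pa}[0]=1$. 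Substituting back into \eqref{Ffun}, the contribution $\frac{1}{\pa}\pexp_\frac{\pb}{\pa}[0]=\frac{1}{\pa}$ cancels the explicit $-\frac{1}{\pa}$, leaving precisely $\frac{\delta_{\pa,1}}{\pa}$, as claimed.

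The only point that I expect to require care, and hence the sole obstacle, is the handling of the degenerate parameter values $\pa=0$ or $\pc=0$. By condition \eqref{eab:cond} these never occur simultaneously, and each must be interpreted as a limit ($\pa\to 0$ or $\pc\to 0$). I would check that the two facts used above survive these limits: the inner factor tends to $-\log(1-t)$, which still vanishes at $t=0$, while the outer normalization at the origin remains $1$; moreover $\frac{\delta_{\pa,1}}{\pa}\to 0$ as $\pa\to 0$ since $\delta_{\pa,1}=0$ for $\pa$ near (but not equal to) $1$. This limiting verification is routine, and with it the identity $\Uxp{0}{\pa,\pb,\pc,\pd}=\frac{\delta_{\pa,1}}{\pa}$ holds in all admissible cases.
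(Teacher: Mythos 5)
Your proposal is correct and follows exactly the paper's own argument: identify $\Uxp{0}{\pv}=\Ffun(x,0;\pv)$, observe that the inner factor $\frac{1}{\pc}\pexp_\frac{\pd}{\pc}(0)-\frac{1}{\pc}$ vanishes, so the outer degenerate exponential evaluates to $1$ and the $\frac{1}{\pa}$ terms cancel, leaving $\frac{\delta_{\pa,1}}{\pa}$. Your additional paragraph checking the limiting cases $\pa\to 0$ and $\pc\to 0$ is a sensible extra precaution consistent with the paper's convention in \eqref{eab:cond}, though the paper itself does not spell it out.
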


\begin{proof}
From \eqref{GF:UGF} we have $\Uxp{0}{\pv}=
\Ffun(x,0;\pv)$, so let us substitute $t=0$ into \eqref{Ffun}.
\begin{align}
\Uxp{0}{\pv}&=\frac{1}{\pa}\pexp_\frac{\pb}{\pa}\left[\pa x\left(\frac{1}{\pc}\pexp_\frac{\pd}{\pc}( 0)-\frac{1}{\pc}\right)\right]-\frac{1}{\pa}+\frac{\delta_{\pa,1}}{\pa}\nn\\
&=\frac{1}{\pa}\pexp_\frac{\pb}{\pa}\left[\pa x\left(\frac{1}{\pc}\cdot 1-\frac{1}{\pc}\right)\right]-\frac{1}{\pa}+\frac{\delta_{\pa,1}}{\pa}\nn\\
&=\frac{1}{\pa}\pexp_\frac{\pb}{\pa}(0)-\frac{1}{\pa}+\frac{\delta_{\pa,1}}{\pa}
=\frac{\delta_{\pa,1}}{\pa},
\end{align}
and the proof is complete.
\end{proof}
\begin{theorem} \label{thm2}
For all nonnegative integers $n$, $\Uxp{n}{\pv}$ satisfy the recurrence relation
\begin{align}
\Uxp{{n+1}}{\pv}=x\sum_{m=1}^n\left[\pa\binom{n}{m}+\pb\binom{n}{m-1}\right]\pc^{n-m}\W_{n-m+1}\left(\frac{\pd}{\pc}\right)\Uxp{m}{\pv}
+x\pc^n\W_{n+1}\left(\frac{\pd}{\pc}\right).\label{UGFrec1}
\end{align}
\end{theorem}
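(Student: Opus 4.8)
The plan is to turn the generating-function identity \eqref{GF:UGF} into a first-order differential equation in $t$ and then to read off \eqref{UGFrec1} by comparing the coefficients of $\tn{n}$ on both sides. First I would abbreviate the inner factor by $g(t)=\frac{1}{\pc}\pexp_\frac{\pd}{\pc}(\pc t)-\frac{1}{\pc}$, so that $\Ffun=\frac{1}{\pa}\pexp_\frac{\pb}{\pa}(\pa x\,g)-\frac{1}{\pa}+\frac{\delta_{\pa,1}}{\pa}$. Differentiating in $t$ and applying \eqref{bexp-deriv} through the chain rule, once to the outer and once to the inner degenerate exponential, gives
\begin{align}
\part{\Ffun}{t}=\frac{x\,g'(t)\,\pexp_\frac{\pb}{\pa}(\pa x\,g)}{1-\pb x\,g(t)},\nn
\end{align}
the denominator arising from $1-\frac{\pb}{\pa}\cdot\pa x\,g=1-\pb x\,g$.

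Next I would eliminate the outer exponential: the definition of $\Ffun$ gives $\pexp_\frac{\pb}{\pa}(\pa x\,g)=\pa\Ffun+1-\delta_{\pa,1}$, and clearing the denominator yields the equation
\begin{align}
\bigl(1-\pb x\,g(t)\bigr)\part{\Ffun}{t}=x\,g'(t)\bigl(\pa\Ffun+1-\delta_{\pa,1}\bigr).\nn
\end{align}
I would then expand each factor as an exponential generating function. By \eqref{GF:eab} one has $g(t)=\sum_{k\ge1}\pc^{k-1}\W_k\!\left(\frac{\pd}{\pc}\right)\tn{k}$, hence $g'(t)=\sum_{k\ge0}\pc^{k}\W_{k+1}\!\left(\frac{\pd}{\pc}\right)\tn{k}$; writing $c_k:=\pc^{k}\W_{k+1}\!\left(\frac{\pd}{\pc}\right)$ and recalling $\Ffun=\sum_m\Uxp{m}{\pv}\tn{m}$ and $\part{\Ffun}{t}=\sum_m\Uxp{m+1}{\pv}\tn{m}$, note that the $k$-th coefficient of $g$ is exactly $c_{k-1}$ while $g$ has no constant term.

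I would now extract the coefficient of $\tn{n}$ on each side using the binomial-convolution rule for products of exponential generating functions. On the left, $\pb x\,g\,\part{\Ffun}{t}$ contributes $\pb x\sum_{k=1}^n\binom{n}{k}c_{k-1}\Uxp{n-k+1}{\pv}$, which after the substitution $m=n-k+1$ becomes $\pb x\sum_{m=1}^n\binom{n}{m-1}c_{n-m}\Uxp{m}{\pv}$; on the right, $\pa x\,g'\Ffun$ contributes $\pa x\sum_{m=0}^n\binom{n}{m}c_{n-m}\Uxp{m}{\pv}$ and $x(1-\delta_{\pa,1})g'$ contributes $x(1-\delta_{\pa,1})c_n$. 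Splitting off the $m=0$ summand of the $\pa$-sum gives $\pa x\,c_n\Uxp{0}{\pv}=x\,c_n\delta_{\pa,1}$ by Lemma \ref{lemma1}, which combines with $x(1-\delta_{\pa,1})c_n$ to the clean inhomogeneous term $x\,c_n=x\pc^n\W_{n+1}\!\left(\frac{\pd}{\pc}\right)$; merging the two remaining sums produces the bracket $\pa\binom{n}{m}+\pb\binom{n}{m-1}$ and hence \eqref{UGFrec1}.

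I expect the main obstacle to be the bookkeeping around the $\delta_{\pa,1}$ term rather than any analytic difficulty: one must follow the constant term $\Uxp{0}{\pv}$ through the $m=0$ contribution and check that it exactly cancels the $\delta_{\pa,1}$ part of $x(1-\delta_{\pa,1})c_n$, leaving the single term $x\pc^n\W_{n+1}\!\left(\frac{\pd}{\pc}\right)$. The accompanying reindexing $m=n-k+1$ in the $\pb$-sum, which turns $\binom{n}{k}$ into $\binom{n}{m-1}$ and aligns the coefficients $c_{n-m}=\pc^{n-m}\W_{n-m+1}\!\left(\frac{\pd}{\pc}\right)$ across both sums, is the other spot where an index slip is easy, so I would carry it out with care.
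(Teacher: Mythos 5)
Your proposal is correct and follows essentially the same route as the paper's own proof: differentiate the generating function in $t$, use \eqref{bexp-deriv} to get the first-order ODE $(1-\pb x\,g)\,\partial_t\Ffun = x\,g'\,(\pa\Ffun+1-\delta_{\pa,1})$, expand via the Cauchy product, and resolve the $m=0$ term against $(1-\delta_{\pa,1})$ using Lemma \ref{lemma1}. Your abbreviations $g$ and $c_k$ and the reindexing $m=n-k+1$ merely streamline the same bookkeeping the paper carries out explicitly.
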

\begin{proof}
Let us differentiate \eqref{GF:UGF} w.r.t. $t$.
\begin{align}
\sum_{n=0}^\infty\Uxp{{n+1}}{\pv}\tn{n}&=\part{}{t}\left[\frac{1}{\pa}\pexp_\frac{\pb}{\pa}\left[\pa x\left(\frac{1}{\pc}\pexp_\frac{\pd}{\pc}(\pc t)-\frac{1}{\pc}\right)\right]-\frac{1}{\pa}+\frac{\delta_{\pa,1}}{\pa}\right]\nn\\
&=\frac{1}{\pa}\part{}{t}\pexp_\frac{\pb}{\pa}\left[\pa x\left(\frac{1}{\pc}\pexp_\frac{\pd}{\pc}(\pc t)-\frac{1}{\pc}\right)\right],
\end{align}
where from we obtain
\begin{align}
\sum_{n=0}^\infty\Uxp{{n+1}}{\pv}\tn{n}&=\frac{1}{\pa}\cdot\frac{\pexp_\frac{\pb}{\pa}\left[\pa x\left(\frac{1}{\pc}\pexp_\frac{\pd}{\pc}(\pc t)-\frac{1}{\pc}\right)\right]}{1-\frac{\pb}{\pa}\left[\pa x\left(\frac{1}{\pc}\pexp_\frac{\pd}{\pc}(\pc t)-\frac{1}{\pc}\right)\right]}\cdot\pa x\part{}{t}\left(\frac{1}{\pc}\pexp_\frac{\pd}{\pc}(\pc t)-\frac{1}{\pc}\right).\label{Uderivt1}
\end{align}
Applying \eqref{GF:eab} and performing some simplifications lead to
\begin{align}
\sum_{n=0}^\infty\Uxp{{n+1}}{\pv}\tn{n}&=
\frac{\frac{1}{\pa}\pexp_\frac{\pb}{\pa}
\left[\pa x\left(\frac{1}{\pc}\pexp_\frac{\pd}{\pc}(\pc t)
-\frac{1}{\pc}\right)\right]-\frac{1}{\pa}+\frac{\delta_{\pa,1}}{\pa}
+\frac{1}{\pa}-\frac{\delta_{\pa,1}}{\pa}}{1-\pb x\left(\frac{1}{\pc}
\pexp_\frac{\pd}{\pc}(\pc t)-\frac{1}{\pc}\right)}\nn\\
&\cdot\pa x\part{}{t}\left(\frac{1}{\pc}
\sum_{n=1}^\infty\pc^n\W_n\left(\frac{\pd}{\pc}\right)\tn{n}\right).
\end{align}
After differentiating and applying \eqref{Ffun} and \eqref{GF:eab} we get
\begin{align}
\sum_{n=0}^\infty\Uxp{{n+1}}{\pv}\tn{n}&=
\frac{a\Ffun(x,t;\pv)
+1-\delta_{\pa,1}}{1-\pb x\left(\frac{1}{\pc}
\sum_{n=1}^\infty\pc^n\W_n\left(\frac{\pd}{\pc}\right)\tn{n}\right)}
\cdot x
\sum_{n=0}^\infty\pc^{n}\W_{n+1}\left(\frac{\pd}{\pc}\right)\tn{n}.
\label{hw34}
\end{align}
Multiplying both sides by $1-\pb x\left(\frac{1}{\pc}
\sum_{n=1}^\infty\pc^n\W_n\left(\frac{\pd}{\pc}\right)\tn{n}\right)$ and applying \eqref{GF:UGF} give
\begin{align}
\sum_{n=0}^\infty\Uxp{{n+1}}{\pv}\tn{n}&\cdot\left(1-\pb x\left(\frac{1}{\pc}
\sum_{n=1}^\infty\pc^n\W_n\left(\frac{\pd}{\pc}\right)\tn{n}\right)\right)\nn\\
&=
\left(\pa\sum_{n=0}^\infty\Uxp{{n}}{\pv}\tn{n}
+1-\delta_{\pa,1}\right)\cdot x
\sum_{n=0}^\infty\pc^{n}\W_{n+1}\left(\frac{\pd}{\pc}\right)\tn{n}.
\end{align}
After opening parenthesis and applying the Cauchy product, we get
\begin{align}
\sum_{n=0}^\infty\Uxp{{n+1}}{\pv}\tn{n}&-\pb x\sum_{n=1}^\infty
\sum_{m=0}^{n-1}\binom{n}{m}\Uxp{{m+1}}{\pv}\pc^{n-m-1}\W_{n-m}\left(\frac{\pd}{\pc}\right)\tn{n}\nn\\
&=
\pa x\sum_{n=0}^\infty\sum_{m=0}^n\binom{n}{m}\Uxp{{m}}{\pv}\pc^{n-m}\W_{n-m+1}\left(\frac{\pd}{\pc}\right)\tn{n}\nn\\
&+(1-\delta_{\pa,1}) x
\sum_{n=0}^\infty\pc^{n}\W_{n+1}\left(\frac{\pd}{\pc}\right)\tn{n}.\nn
\end{align}
Extracting coefficients of $\tn{n}$ on both sides gives
\begin{align}
\Uxp{{n+1}}{\pv}&%
-\pb x\sum_{m=0}^{n-1}\binom{n}{m}\Uxp{{m+1}}{\pv}
\pc^{n-m-1}\W_{n-m}\left(\frac{\pd}{\pc}\right)\nn\\
&=
\pa x\sum_{m=0}^n\binom{n}{m}\Uxp{{m}}{\pv}\pc^{n-m}\W_{n-m+1}\left(\frac{\pd}{\pc}\right)\nn\\
&+(1-\delta_{\pa,1}) x
\pc^{n}\W_{n+1}\left(\frac{\pd}{\pc}\right).\nn
\end{align}
By shifting summation indexes under first sum and taking into 
consideration that  $\Uxp{0}{\pv}=\frac{\delta_{\pa,1}}{\pa}$  by Lemma~\ref{lemma1}, 
we obtain
\begin{align}
\Uxp{{n+1}}{\pv}&%
=\pb x\sum_{m=1}^{n}\binom{n}{m-1}\Uxp{{m}}{\pv}
\pc^{n-m}\W_{n-m+1}\left(\frac{\pd}{\pc}\right)\nn\\
&+
ax\sum_{m=1}^n\binom{n}{m}\Uxp{{m}}{\pv}\pc^{n-m}\W_{n-m+1}\left(\frac{\pd}{\pc}\right)\nn\\
&= x\delta_{\pa,1}\pc^n\W_{n+1}\left(\frac{\pd}{\pc}\right)+(1-\delta_{\pa,1}) x
\pc^{n}\W_{n+1}\left(\frac{\pd}{\pc}\right)\nn\\
&=x\sum_{m=1}^n\left[\pa\binom{n}{m}+\pb\binom{n}{m-1}\right]\pc^{n-m}\W_{n-m+1}\left(\frac{\pd}{\pc}\right)\Uxp{m}{\pv}
+x\pc^n\W_{n+1}\left(\frac{\pd}{\pc}\right),
\end{align}
which completes the proof.
\end{proof}

\begin{example} \label{ex2-3}
Applying the result of the previous theorem, we can construct the functions $\Uxp{n}{\pv}$ recursively. One can check that
\begin{align*}
\Uxp{0}{\pv}&=\frac{\delta_{\pa,1}}{\pa},\\
\Uxp{1}{\pv}&=x\W_{1}\left(\frac{\pd}{\pc}\right)=x,\\
\Uxp{2}{\pv}&=x\left(\pa+\pb\right)\W_{1}\left(\frac{\pd}{\pc}\right)\Uxp{1}{\pv}
+x\pc\W_{2}\left(\frac{\pd}{\pc}\right)=(\pa+\pb) x^2+(\pc+\pd) x
\\
\Uxp{3}{\pv}&=x\sum_{m=1}^2\left[\pa\binom{2}{m}+\pb\binom{2}{m-1}\right]\pc^{2-m}\W_{3-m}\left(\frac{\pd}{\pc}\right)\Uxp{m}{\pv}
+x\pc^2\W_{3}\left(\frac{\pd}{\pc}\right)\\
&=(\pa+\pb)(\pa+2\pb) x^3+3(\pa+\pb)(\pc+\pd) x^2
+(\pc+\pd)(\pc+2\pd)x.
\end{align*}
It is easy to notice that $\Uxp{1}{\pv}, \Uxp{2}{\pv}, \Uxp{3}{\pv}$ are actually polynomials in variables $x,\pa,\pb,\pc,\pd$.
\end{example}
The next proposition shows that the last observation is true also for any $\Uxp{n}{\pv}$, where $n$ is a positive integer, denoted further as $n\in\mathbb{N}$.
\begin{prop}\label{prop-degrees}
For all $n\in\mathbb{N}$, $\Uxp{n}{\pv}$ is a polynomial  in the variables  $x$, $\pa$, $\pb$, $\pc$, and $\pd$. Its degrees, respectively, are 
\begin{align}
&deg_x(\Uxp{n}{\pv})=n,\nn\\
&deg_\pa(\Uxp{n}{\pv})=n-1,\nn\\
&deg_\pb(\Uxp{n}{\pv})=n-1,\nn\\
&deg_\pc(\Uxp{n}{\pv})=n-1,\nn\\
&deg_\pd(\Uxp{n}{\pv})=n-1.\nn
\end{align}
\end{prop}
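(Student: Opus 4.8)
The plan is to argue by induction on $n$ using the recurrence \eqref{UGFrec1} of Theorem~\ref{thm2}, taking $\Uxp{1}{\pv}=x$ (together with the values displayed in Example~\ref{ex2-3}) as base cases. Before running the induction it is convenient to record one structural fact about the coefficients entering \eqref{UGFrec1}. By Remark~\ref{p1911-rmk1} the coefficient $\W_k(\beta)$ is a polynomial of degree $k-1$ in $\beta$ with $\W_k(0)=1$; substituting $\beta=\pd/\pc$ and clearing denominators gives
\begin{align}
\pc^{k-1}\W_k\!\left(\frac{\pd}{\pc}\right)=\prod_{j=1}^{k-1}\left(\pc+j\pd\right),\label{plan-Vhom}
\end{align}
a polynomial in $\pc,\pd$ that is homogeneous of total degree $k-1$; in particular its degree in each of $\pc$ and $\pd$ equals $k-1$. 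Consequently every factor occurring in \eqref{UGFrec1} — namely $\pa\binom{n}{m}+\pb\binom{n}{m-1}$, the quantity $\pc^{n-m}\W_{n-m+1}(\pd/\pc)$, and the free term $\pc^n\W_{n+1}(\pd/\pc)$ — is a genuine polynomial. Since each $\Uxp{m}{\pv}$ with $m\le n$ is a polynomial by the inductive hypothesis, \eqref{UGFrec1} then exhibits $\Uxp{n+1}{\pv}$ as a polynomial as well.

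For the upper bounds I would read off, summand by summand, the degree contributed in each variable, using \eqref{plan-Vhom} and the inductive estimates $deg_x\le m$ and $deg_\pa,deg_\pb,deg_\pc,deg_\pd\le m-1$ for $\Uxp{m}{\pv}$. The $x$-degree of the $m$-th summand is $1+m\le n+1$; its $\pa$- and $\pb$-degrees are at most $1+(m-1)=m\le n$ (the extra unit coming from the bracket); and its $\pc$- and $\pd$-degrees are at most $(n-m)+(m-1)=n-1$. The only term that pushes the $\pc$- and $\pd$-degrees up to the claimed value $n$ is the free term $x\pc^n\W_{n+1}(\pd/\pc)$, which by \eqref{plan-Vhom} has degree $n$ in each of $\pc,\pd$. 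Collecting these estimates yields $deg_x(\Uxp{n+1}{\pv})\le n+1$ and $deg_\pa,deg_\pb,deg_\pc,deg_\pd\le n$, i.e. precisely the desired upper bounds for index $n+1$.

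To see that these bounds are attained I would track two particular coefficients of $\Uxp{n}{\pv}$, viewed as a polynomial in $x$; write $\Uxp{n}{\pv}=\sum_j c_{n,j}\,x^j$. The coefficient of $x^{n+1}$ in \eqref{UGFrec1} receives a contribution only from $m=n$ (all other summands carry lower $x$-degree, and the free term is linear in $x$), giving $c_{n+1,n+1}=(\pa+n\pb)\,c_{n,n}$ with $c_{1,1}=1$, hence $c_{n,n}=\prod_{k=1}^{n-1}(\pa+k\pb)$. This is a nonzero polynomial of degree $n-1$ in each of $\pa,\pb$, which simultaneously certifies $deg_x(\Uxp{n}{\pv})=n$ and the lower bounds $deg_\pa,deg_\pb\ge n-1$. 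Similarly, since each $\Uxp{m}{\pv}$ with $m\ge1$ is divisible by $x$ (immediate from \eqref{UGFrec1}), the product $x\sum_m[\cdots]\Uxp{m}{\pv}$ is divisible by $x^2$, so the coefficient of $x^1$ comes solely from the free term: $c_{n+1,1}=\pc^n\W_{n+1}(\pd/\pc)=\prod_{j=1}^{n}(\pc+j\pd)$, nonzero of degree $n$ in each of $\pc,\pd$. This yields the remaining lower bounds $deg_\pc,deg_\pd\ge n-1$, and together with the upper bounds all five degrees are pinned down exactly.

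The bookkeeping of degrees in the upper-bound step is routine. The genuine point requiring care — the main obstacle — is the lower-bound step: one must verify that the extremal coefficients $c_{n,n}$ and $c_{n,1}$ do not vanish and that no cancellation occurs among the summands of \eqref{UGFrec1} contributing to $x^{n+1}$ and to $x^{1}$. Isolating these two coefficients (using that $\Uxp{m}{\pv}$ has no constant term in $x$, and that only $m=n$ reaches the top $x$-power) is exactly what makes the closed products $\prod_{k}(\pa+k\pb)$ and $\prod_{j}(\pc+j\pd)$ emerge, and their manifest nonvanishing closes the argument.
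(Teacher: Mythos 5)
Your proof is correct and follows the same skeleton as the paper's: induction on $n$ via the recurrence \eqref{UGFrec1}, rewriting the factors $\pc^{k-1}\W_k\left(\frac{\pd}{\pc}\right)$ as $\prod_{j=1}^{k-1}(\pc+j\pd)$ to get polynomiality, and then reading off degrees summand by summand. Where you genuinely diverge is in justifying that the degree bounds are \emph{attained}. The paper simply equates the degree of $\Uxp{{n+1}}{\pv}$ with the maximum of the degrees of the summands in \eqref{UGFrec1}; strictly speaking that only gives an upper bound unless one rules out cancellation among top-degree terms (which is automatic here because every summand has nonnegative coefficients, a fact the paper never states). You close that gap explicitly by isolating two extremal coefficients: $[x^n]\Uxp{n}{\pv}=\prod_{k=1}^{n-1}(\pa+k\pb)$, using that only the $m=n$ summand reaches the top $x$-power, and $[x^1]\Uxp{n}{\pv}=\prod_{j=1}^{n-1}(\pc+j\pd)$, using that each $\Uxp{m}{\pv}$ is divisible by $x$ so only the free term contributes. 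These are precisely the paper's subsequent Corollary, and your divisibility observation is its Lemma~\ref{lemma_zero-term}, so in effect you prove those two later results en route and use them to pin down the degrees exactly. The cost is a little extra bookkeeping; the gain is a lower-bound step that the paper leaves implicit.
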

\begin{proof}
The proof is by induction on $n$. By applying \eqref{UGFrec1} from Theorem~\ref{thm2}, one can easy obtain that $\Uxp{1}{\pv}=x$ and the Proposition's statement holds. Assume that it holds for degree $n$, we will prove it for $n+1$. Let us write explicitly expressions for $\W_{n-m+1}\left(\frac{\pd}{\pc}\right)$ and  $\W_{n+1}\left(\frac{\pd}{\pc}\right)$ in \eqref{UGFrec1} to obtain 
\begin{align}
\Uxp{{n+1}}{\pv}&=x\sum_{m=1}^n\left[\pa\binom{n}{m}+\pb\binom{n}{m-1}\right]\pc^{n-m}\prod_{j=1}^{n-m}\left(\frac{j\pd}{\pc}+1\right)\Uxp{m}{\pv} \nn\\&+x\pc^n\prod_{j=1}^{n}\left(\frac{j\pd}{\pc}+1\right),\nn
\end{align}
that can be written as
\begin{align}
\Uxp{{n+1}}{\pv}&=x\sum_{m=1}^n\left[\pa\binom{n}{m}+\pb\binom{n}{m-1}\right]\prod_{j=1}^{n-m}\left(j\pd+\pc\right)\Uxp{m}{\pv} \nn\\
&+x\prod_{j=1}^{n}\left(j\pd+\pc\right).\label{rec1-1}
\end{align}
By induction's assumption $\Uxp{m}{\pv}$ for $m\in\{1,\ldots,n\}$ are polynomials in the variables $x$, $\pa$, $\pb$, $\pc$, and $\pd$, therefore from \eqref{rec1-1} we immediately conclude that $\Uxp{{n+1}}{\pv}$ is a polynomial in the same variables. To complete the proof we have to check the degrees of $\Uxp{{n+1}}{\pv}$ w.r.t. each one of the variables. From \eqref{rec1-1} we get
\begin{align}
&deg_x(\Uxp{{n+1}}{\pv})=\max_m (deg_x(x\Uxp{{m}}{\pv}))=n+1,\nn\\
&deg_\pa(\Uxp{{n+1}}{\pv})=\max_m (deg_\pa(\pa\Uxp{{m}}{\pv}))=(n-1)+1=n,\nn\\
&deg_\pb(\Uxp{{n+1}}{\pv})=\max_m (deg_\pb(\pb\Uxp{{m}}{\pv}))=(n-1)+1=n.\nn
\end{align}
For two other variables we get
\begin{align}
deg_\pc(\Uxp{{n+1}}{\pv})&=\max[\max_m (deg_\pc(\pc^{n-m}\Uxp{{m}}{\pv})),deg_\pc(\pc^n)]\nn\\
&=\max[\max_m((n-m)+(m-1)),n]=n,\nn\\
deg_\pd(\Uxp{{n+1}}{\pv})&=\max[\max_m (deg_\pd(\pd^{n-m}\Uxp{{m}}{\pv})),deg_\pd(\pd^n)]\nn\\
&=\max[\max_m((n-m)+(m-1)),n]=n,\nn
\end{align}
which completes the proof.
\end{proof}

Let us denote by $[x^m]\Uxp{n}{\pv}$ the coefficient of $x^m$ of the polynomial $\Uxp{n}{\pv}$. Then we can state the following Lemma.
\begin{lemma} \label{lemma_zero-term}
For all $n\in\mathbb{N}$, we have $[x^0]\Uxp{n}{\pv}=0$.
\end{lemma}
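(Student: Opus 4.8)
The plan is to exploit the fact that the whole statement is about the $x^0$-part of a polynomial in $x$, and that, by Proposition~\ref{prop-degrees}, each $\Uxp{n}{\pv}$ is genuinely a polynomial in $x$, so that $[x^0]\Uxp{n}{\pv}$ is simply the value $\Uxp{n}{\pv}\big|_{x=0}$. Thus it suffices to show that setting $x=0$ kills every coefficient with $n\geq 1$. I see two essentially equivalent routes, and I would favour the generating-function one because it avoids induction entirely.

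First I would substitute $x=0$ directly into the defining formula \eqref{Ffun}. The argument of the outer function becomes $\pa\cdot 0\cdot(\cdots)=0$, so by \eqref{GF:eab} we have $\pexp_\frac{\pb}{\pa}[0]=1$, and the whole expression collapses to $\frac{1}{\pa}-\frac{1}{\pa}+\frac{\delta_{\pa,1}}{\pa}=\frac{\delta_{\pa,1}}{\pa}$. This is the same computation as in Lemma~\ref{lemma1}, except that here it is the vanishing of $x$ (rather than $t$) that trivializes the inner bracket. The upshot is that $\Ffun(0,t;\pv)=\frac{\delta_{\pa,1}}{\pa}$, a quantity that is constant in $t$.

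Next I would compare coefficients of $\tn{n}$ in \eqref{GF:UGF} evaluated at $x=0$. The left-hand side is $\sum_{n=0}^\infty\Uxp{n}{\pv}\big|_{x=0}\tn{n}$, while the right-hand side $\frac{\delta_{\pa,1}}{\pa}$ contributes only to the $n=0$ term. Matching coefficients therefore yields $\Uxp{n}{\pv}\big|_{x=0}=0$ for every $n\geq 1$, which is exactly $[x^0]\Uxp{n}{\pv}=0$ once we invoke Proposition~\ref{prop-degrees} to read the value at $x=0$ as the $x^0$-coefficient.

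Alternatively, a one-line argument proceeds straight from the recurrence \eqref{UGFrec1}: both summands on its right-hand side carry an explicit factor $x$, so $\Uxp{n+1}{\pv}$ is divisible by $x$ for all $n\geq 0$, whence its $x^0$-coefficient vanishes; together with the base value $\Uxp{1}{\pv}=x$ this covers all $n\geq 1$. There is no real obstacle in either approach; the only subtlety worth stating explicitly is the identification of ``the constant term in $x$'' with ``the value at $x=0$'', which is legitimate precisely because Proposition~\ref{prop-degrees} guarantees that $\Uxp{n}{\pv}$ is a polynomial in $x$.
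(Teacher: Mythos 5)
Your proposal is correct, and your preferred argument takes a genuinely different route from the paper's. The paper's own proof is precisely your ``alternative'' one-liner: an induction on $n$ using the recurrence \eqref{UGFrec1} of Theorem~\ref{thm2} together with the base case $\Uxp{1}{\pv}=x$ --- every term on the right-hand side of \eqref{UGFrec1} carries an explicit factor $x$, so $[x^0]\Uxp{n+1}{\pv}=0$; as you observe, the induction hypothesis is barely used, since one only needs that the $\Uxp{m}{\pv}$ are polynomials in $x$ (no negative powers). Your favoured route --- setting $x=0$ in \eqref{Ffun} to get $\Ffun(0,t;\pv)=\frac{\delta_{\pa,1}}{\pa}$, a constant in $t$, and then matching coefficients of $\tn{n}$ in \eqref{GF:UGF} --- is the $x\leftrightarrow t$ mirror of Lemma~\ref{lemma1} and handles all $n\geq 1$ at once without induction. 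Its only cost is the point you yourself flag: one must know that evaluating at $x=0$ commutes with the expansion in $t$, i.e.\ that $\Ffun(0,t;\pv)=\sum_{n\geq0}\bigl(\Uxp{n}{\pv}\big|_{x=0}\bigr)\tn{n}$, which is legitimate because each coefficient $\Uxp{n}{\pv}$ is a polynomial in $x$ by Proposition~\ref{prop-degrees}, so $[x^0]$ and evaluation at $x=0$ agree coefficientwise. There is also no circularity in invoking that proposition, since its proof in the paper rests only on Theorem~\ref{thm2} and not on this lemma. Both routes are sound; the paper's is shorter given that the recurrence is already in hand, while yours makes the structural reason transparent (the generating function degenerates to a constant at $x=0$).
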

\begin{proof}
The proof is by induction on $n$. It is easy to obtain from the Theorem~\ref{thm2} and the Lemma~\ref{lemma1} that $\Uxp{1}{\pv}=x$, and our statement holds. Assuming it holds for $n$, one can immediately see form \eqref{UGFrec1} that it holds for $n+1$.
\end{proof}

Another observation that can be done from the Example \ref{ex2-3} is that the coefficients of the highest and lowest degrees of $x$ in each one of the polynomials $\Uxp{{2}}{\pv}, \Uxp{{3}}{\pv}$ are not mixed and depend either on the pair $(\pa,\pb)$ or on the pair $(\pc,\pd)$. The next Corollary of the Theorem \ref{thm2} generalizes this observation for any polynomial $\Uxp{{n}}{\pv}$.  
\begin{corollary} For any $n\in\mathbb{N}$, we have 
\begin{align*}
[x^n]\Uxp{n}{\pv}&=\pa^{n-1}\W_n\left(\frac{\pb}{\pa}\right)
=\prod_{m=1}^{n-1}(\pa+m\pb),\\
[x^1]\Uxp{n}{\pv}&=\pc^{n-1}\W_n\left(\frac{\pd}{\pc}\right)=\prod_{m=1}^{n-1}(\pc+m\pd).
\end{align*}
\end{corollary}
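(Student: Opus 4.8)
The plan is to read off both coefficients directly from the recurrence \eqref{UGFrec1} in Theorem~\ref{thm2}, exploiting the degree information of Proposition~\ref{prop-degrees} for the top coefficient $[x^n]\Uxp{n}{\pv}$ and the vanishing constant term of Lemma~\ref{lemma_zero-term} for the bottom coefficient $[x^1]\Uxp{n}{\pv}$. In each case the key observation is that the sum in \eqref{UGFrec1} collapses to a single surviving contribution, so essentially no computation is required beyond bookkeeping and an elementary induction.

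For the top coefficient I would first note that, since $\deg_x(\Uxp{m}{\pv})=m$ by Proposition~\ref{prop-degrees}, each product $x\Uxp{m}{\pv}$ has $x$-degree $m+1$; hence in \eqref{UGFrec1} only the term $m=n$ can contribute to $[x^{n+1}]\Uxp{n+1}{\pv}$, while the free term $x\pc^n\W_{n+1}\left(\frac{\pd}{\pc}\right)$ contributes only to $x^1$. Evaluating the bracket at $m=n$ gives $\pa\binom{n}{n}+\pb\binom{n}{n-1}=\pa+n\pb$, together with $\pc^{0}\W_{1}\left(\frac{\pd}{\pc}\right)=1$, so that
\begin{align}
[x^{n+1}]\Uxp{n+1}{\pv}=(\pa+n\pb)\,[x^n]\Uxp{n}{\pv}.\nn
\end{align}
Starting from $[x^1]\Uxp{1}{\pv}=1$ (as $\Uxp{1}{\pv}=x$) and iterating this first-order recurrence telescopes to $\prod_{m=1}^{n-1}(\pa+m\pb)$, which equals $\pa^{n-1}\W_n\left(\frac{\pb}{\pa}\right)$ after using $\W_n(\pb)=\prod_{m=1}^{n-1}(m\pb+1)$ from \eqref{Wn}.

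For the bottom coefficient I would extract $[x^1]$ from \eqref{UGFrec1}. In the sum $x\sum_{m=1}^{n}[\cdots]\Uxp{m}{\pv}$ the coefficient of $x^1$ equals $\sum_{m=1}^{n}[\cdots]\,[x^0]\Uxp{m}{\pv}$, and every $[x^0]\Uxp{m}{\pv}$ vanishes by Lemma~\ref{lemma_zero-term}. Therefore only the free term survives, giving $[x^1]\Uxp{n+1}{\pv}=\pc^n\W_{n+1}\left(\frac{\pd}{\pc}\right)$, that is, $[x^1]\Uxp{n}{\pv}=\pc^{n-1}\W_n\left(\frac{\pd}{\pc}\right)=\prod_{m=1}^{n-1}(\pc+m\pd)$, again by \eqref{Wn}.

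I do not expect a genuine obstacle here, as the proof is driven entirely by the structure of \eqref{UGFrec1}. The only points demanding care are the base case $n=1$, where both products are empty and equal $1$ in agreement with $\Uxp{1}{\pv}=x$, and ensuring that the degree bookkeeping for $[x^n]$ and the application of Lemma~\ref{lemma_zero-term} for $[x^1]$ genuinely isolate the single surviving term in each reduction.
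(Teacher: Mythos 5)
Your proof is correct and takes essentially the same route as the paper: both arguments read the two coefficients directly off the recurrence \eqref{UGFrec1}, telescoping the single surviving top-degree term (with factor $\pa\binom{n}{m}+\pb\binom{n}{m-1}$ at $m=n$) to get $[x^n]\Uxp{n}{\pv}$, and isolating the free term $x\pc^n\W_{n+1}\left(\frac{\pd}{\pc}\right)$ to get $[x^1]\Uxp{n}{\pv}$. If anything, your write-up is slightly more careful: you explicitly invoke Lemma~\ref{lemma_zero-term} to show the sum contributes nothing to $[x^1]$ (the paper leaves this implicit), and your telescoping factor $(\pa+n\pb)$ for the step from $n$ to $n+1$ is consistently indexed, whereas the paper's intermediate lines contain a harmless off-by-one slip before arriving at the same product $\prod_{m=1}^{n-1}(\pa+m\pb)$.
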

\begin{proof}
It follows from the \eqref{UGFrec1} that
\begin{align*}
[x^1]\Uxp{{n}}{\pv}=\pc^{n-1}\W_n\left(\frac{\pd}{\pc}\right).
\end{align*}
Now, applying the definition \eqref{Wn} of $\W_n$ we obtain the Corollary's second statement.
In order to obtain the coefficient of $x^n$ of the polynomial $\Uxp{n}{\pv}$, let us consider \eqref{UGFrec1} again.
\begin{align*}
[x^n]\Uxp{n}{\pv}&=[x^{n-1}]\left[\pa\binom{n}{n}+\pb\binom{n}{n-1}\right]\pc^{n-n}\W_{n-n+1}\left(\frac{\pd}{\pc}\right)\Uxp{{n-1}}{\pv}\\
&=(\pa+n\pb)\cdot[x^{n-1}]\Uxp{{n-1}}{\pv}\\
&=(\pa+n\pb)(\pa+(n-1)\pb)\cdot[x^{n-2}]\Uxp{{n-2}}{\pv}=\cdots\\
&=\prod_{m=1}^{n-1}(\pa+m\pb),
\end{align*}
which completes the proof.
\end{proof}

Next two lemmas show connections between polynomials $\Uxp{n}{\pv}$ and their partial derivatives w.r.t. variable $x$.
\begin{lemma}\label{l2-3} For all $n\in \mathbb{N}$, we have
\begin{align}
\part{}{x}\Uxp{n}{\pv}&=\pa\sum_{m=1}^{n-1}\binom{n}{m}\pc^{n-m-1}\W_{n-m}\frdc \Uxp{m} {\pv}\label{derx1}+ \pc^{n-1}\W_{n}\frdc\\
&+\pb x\sum_{m=1}^{n-1}\binom{n}{m}\pc^{n-m-1}\W_{n-m}\frdc\part{}{x}\Uxp{m}{\pv}.\nn
\end{align}
\end{lemma}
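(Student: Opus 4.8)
The plan is to mirror the proof of Theorem~\ref{thm2}, but to differentiate the generating function \eqref{Ffun} with respect to $x$ rather than $t$. Writing $A=\pa x\left(\frac{1}{\pc}\pexp_\frac{\pd}{\pc}(\pc t)-\frac{1}{\pc}\right)$ for the inner argument, I would first apply the chain rule together with the derivative formula \eqref{bexp-deriv} to get
\[
\part{}{x}\Ffun(x,t;\pv)=\frac{1}{\pa}\cdot\frac{\pexp_\frac{\pb}{\pa}(A)}{1-\frac{\pb}{\pa}A}\cdot\part{}{x}A,
\]
and since $\part{}{x}A=\pa\left(\frac{1}{\pc}\pexp_\frac{\pd}{\pc}(\pc t)-\frac{1}{\pc}\right)$ the two factors of $\pa$ cancel. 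The crucial structural point is that the factor $\frac{1}{\pc}\pexp_\frac{\pd}{\pc}(\pc t)-\frac{1}{\pc}$ appearing here carries \emph{no} extra power of $x$, in contrast to the $t$-derivative of Theorem~\ref{thm2}, which produced one; this is precisely the source of the asymmetry between the $\pa$-sum and the $\pb x$-sum in \eqref{derx1}.

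Next I would rewrite the numerator via the definition \eqref{Ffun}, in the form $\pexp_\frac{\pb}{\pa}(A)=\pa\Ffun(x,t;\pv)+1-\delta_{\pa,1}$ exactly as was done in \eqref{hw34}, and note that $\frac{\pb}{\pa}A=\pb x\left(\frac{1}{\pc}\pexp_\frac{\pd}{\pc}(\pc t)-\frac{1}{\pc}\right)$. Clearing the denominator and expanding $\frac{1}{\pc}\pexp_\frac{\pd}{\pc}(\pc t)-\frac{1}{\pc}=\sum_{k=1}^\infty\pc^{k-1}\W_k\frdc\tn{k}$ through \eqref{GF:eab} gives
\[
\left(1-\pb x\sum_{k=1}^\infty\pc^{k-1}\W_k\frdc\tn{k}\right)\part{}{x}\Ffun(x,t;\pv)=\left(\pa\Ffun(x,t;\pv)+1-\delta_{\pa,1}\right)\sum_{k=1}^\infty\pc^{k-1}\W_k\frdc\tn{k}.
\]

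Finally, I would substitute $\part{}{x}\Ffun(x,t;\pv)=\sum_n\part{}{x}\Uxp{n}{\pv}\tn{n}$ and $\Ffun(x,t;\pv)=\sum_n\Uxp{n}{\pv}\tn{n}$, expand both products by the Cauchy rule, and extract the coefficient of $\tn{n}$. The step that needs care — and the only real obstacle — is the $m=0$ term, which is handled exactly by Lemma~\ref{lemma1}: since $\Uxp{0}{\pv}=\frac{\delta_{\pa,1}}{\pa}$ is constant in $x$, its derivative vanishes and kills the $m=0$ term on the left, while on the right the $m=0$ term contributes $\pa\cdot\frac{\delta_{\pa,1}}{\pa}\,\pc^{n-1}\W_n\frdc=\delta_{\pa,1}\,\pc^{n-1}\W_n\frdc$, which combines with the separate $(1-\delta_{\pa,1})\pc^{n-1}\W_n\frdc$ term to collapse into the single clean term $\pc^{n-1}\W_n\frdc$. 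After transferring the $\pb x$-sum to the right-hand side, both remaining sums run from $m=1$ to $n-1$, and the result is exactly \eqref{derx1}; everything apart from this Kronecker-delta bookkeeping is the same mechanical coefficient extraction already performed in Theorem~\ref{thm2}.
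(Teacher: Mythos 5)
Your proposal is correct and follows essentially the same route as the paper's own proof: differentiate the generating function with respect to $x$, apply \eqref{bexp-deriv} and the rewriting $\pexp_\frac{\pb}{\pa}(A)=\pa\Ffun+1-\delta_{\pa,1}$, clear the denominator, extract coefficients of $\tn{n}$ via the Cauchy product, and invoke Lemma~\ref{lemma1} to dispose of the $m=0$ terms. If anything, you make the final Kronecker-delta bookkeeping (the merging of $\delta_{\pa,1}\pc^{n-1}\W_n\frdc$ with $(1-\delta_{\pa,1})\pc^{n-1}\W_n\frdc$) more explicit than the paper, which compresses it into ``applying the fact that $\Uxp{0}{\pv}=\frac{\delta_{\pa,1}}{\pa}$ and rearranging.''
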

\begin{proof}
We differentiate \eqref{GF:UGF} w.r.t. $x$ and obtain

\begin{align}
\sum_{n=0}^\infty\part{}{x}\Uxp{n}{\pv}\tn{n}&=
\part{}{x}\left[\frac{1}{\pa}\pexp_\frac{\pb}{\pa}\left[\pa x
\left(\frac{1}{\pc}\pexp_\frac{\pd}{\pc}(\pc t)-\frac{1}{\pc}\right)\right]
-\frac{1}{\pa}+\frac{\delta_{\pa,1}}{\pa}\right]\nn\\
&=\frac{1}{\pa}\part{}{x}\pexp_\frac{\pb}{\pa}\left[\pa x\left(\frac{1}{\pc}
\pexp_\frac{\pd}{\pc}(\pc t)-\frac{1}{\pc}\right)\right],\nn
\end{align}
where from we obtain
\begin{align}
\sum_{n=0}^\infty\part{}{x}\Uxp{n}{\pv}\tn{n}&=
\frac{1}{\pa}\cdot\frac{\pexp_\frac{\pb}{\pa}
\left[\pa x\left(\frac{1}{\pc}\pexp_\frac{\pd}{\pc}(\pc t)
-\frac{1}{\pc}\right)\right]}{1-\frac{\pb}{\pa}
\left[\pa x\left(\frac{1}{\pc}\pexp_\frac{\pd}{\pc}(\pc t)
-\frac{1}{\pc}\right)\right]}
\cdot\pa\left(\frac{1}{\pc}\pexp_\frac{\pd}{\pc}(\pc t)-\frac{1}{\pc}\right).\label{Uderivx1}
\end{align}
By applying \eqref{GF:bexp} and \eqref{Ffun}, we write \eqref{Uderivx1} as following.
\begin{align}
\sum_{n=0}^\infty\part{}{x}\Uxp{n}{\pv}\tn{n}&=
\frac{\pa \Ffun +1-\delta_{\pa,1}}{1-\pb
 x\sum_{n=1}^\infty \pc^{n-1}\W_{n}\frdc\tn{n}}
\cdot\sum_{n=1}^\infty \pc^{n-1}\W_{n}\frdc\tn{n}.\label{hw44}
\end{align}
After multiplication both sides by $1-\pb x
 \sum\limits_{n=1}^\infty \pc^{n-1}\W_{n}\frdc\tn{n}$ and applying \eqref{GF:UGF}, we get
 \begin{align}
\sum_{n=0}^\infty\part{}{x}\Uxp{n}{\pv}\tn{n}
&\cdot
\left(1-\pb x\sum_{n=1}^\infty \pc^{n-1}\W_{n}\frdc\tn{n}\right)\nn\\
&=
\left(\pa \sum_{n=0}^\infty\Uxp{n}{\pv}\tn{n}+1-\delta_{\pa,1}\right)
\cdot\sum_{n=1}^\infty \pc^{n-1}\W_{n}\frdc\tn{n}.
\end{align}
Opening parenthesis and applying the Cauchy product lead to 
\begin{align}
\sum_{n=0}^\infty&\part{}{x}\Uxp{n}{\pv}\tn{n}-\pb x\sum_{n=1}^\infty\sum_{m=0}^{n-1}\binom{n}{m}\part{}{x}\Uxp{m}{\pv}\pc^{n-m-1}\W_{n-m}\frdc\tn{n}\nn\\
&=\pa\sum_{n=1}^\infty\sum_{m=0}^{n-1}\binom{n}{m}\Uxp{m}{\pv}\pc^{n-m-1}\W_{n-m}\frdc\tn{n}
+(1-\delta_{\pa,1})\sum_{n=1}^\infty \pc^{n-1}\W_{n}\frdc\tn{n},\nn
\end{align}
where from, by extracting the coefficients of $\tn{n}$, we obtain for positive integers $n$ that
\begin{align}
\part{}{x}\Uxp{n}{\pv}&-\pb x\sum_{m=0}^{n-1}\binom{n}{m}\part{}{x}\Uxp{m}{\pv}\pc^{n-m-1}\W_{n-m}\frdc\nn\\
&=\pa\sum_{m=0}^{n-1}\binom{n}{m}\Uxp{m}{\pv}\pc^{n-m-1}\W_{n-m}\frdc
+(1-\delta_{\pa,1}) \pc^{n-1}\W_{n}\frdc.\nn
\end{align}
Applying the fact that $\Uxp{0}{\pv}=\frac{\delta_{\pa,1}}{\pa}$ and rearranging of the terms complete the proof.
\end{proof}

\begin{lemma}\label{l2-4} For all $n\in \mathbb{N}$, we have
\begin{align}\label{derx2}
\sum_{m=0}^{n-1}\binom{n}{m}\pc^{n-m-1}&\W_{n-m}\frdc\Uxp{{m+1}}{\pv}\\
&=x\sum_{m=1}^n\binom{n}{m}\pc^{n-m}\W_{n-m+1}\frdc\part{}{x}\Uxp{m}{\pv}.\nn
\end{align}
\end{lemma}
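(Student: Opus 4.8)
The plan is to recast the claimed identity \eqref{derx2} as an equality of coefficients of $\tn{n}$ in a product of two exponential generating functions, and then to verify that product identity using the closed forms already extracted in the proofs of Theorem~\ref{thm2} and Lemma~\ref{l2-3}. First I would introduce the auxiliary series $g(t):=\frac{1}{\pc}\pexp_\frac{\pd}{\pc}(\pc t)-\frac{1}{\pc}$, which by \eqref{GF:eab} equals $\sum_{n=1}^\infty \pc^{n-1}\W_n\frdc\tn{n}$ and whose $t$-derivative is $g'(t)=\part{}{t}g(t)=\sum_{n=0}^\infty \pc^n\W_{n+1}\frdc\tn{n}$. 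From \eqref{GF:UGF} I would also record that $\part{}{t}\Ffun=\sum_{n=0}^\infty\Uxp{n+1}{\pv}\tn{n}$ and $\part{}{x}\Ffun=\sum_{n=0}^\infty\part{}{x}\Uxp{n}{\pv}\tn{n}$.

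Next I would recognize the two sides of \eqref{derx2} as Cauchy products of these exponential generating functions. The left-hand side is precisely the coefficient of $\tn{n}$ in $g(t)\cdot\part{}{t}\Ffun$: since the constant term of $g(t)$ vanishes, the Cauchy product weighted by $\binom{n}{m}$ restricts the summation index to $m\le n-1$, reproducing $\sum_{m=0}^{n-1}\binom{n}{m}\pc^{n-m-1}\W_{n-m}\frdc\Uxp{m+1}{\pv}$. Likewise, $x$ times the coefficient of $\tn{n}$ in $g'(t)\cdot\part{}{x}\Ffun$ equals the right-hand side, once I use $\part{}{x}\Uxp{0}{\pv}=0$ (because $\Uxp{0}{\pv}=\frac{\delta_{\pa,1}}{\pa}$ is constant in $x$ by Lemma~\ref{lemma1}) to discard the $m=0$ term. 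Thus the entire lemma reduces to the single generating-function identity $g(t)\,\part{}{t}\Ffun=x\,g'(t)\,\part{}{x}\Ffun$.

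Finally I would verify this identity directly. The proofs of Theorem~\ref{thm2} and Lemma~\ref{l2-3} already compute, in \eqref{Uderivt1} and \eqref{Uderivx1} respectively, the closed forms $\part{}{t}\Ffun=\frac{x\,g'(t)}{1-\pb x\,g(t)}\,\pexp_\frac{\pb}{\pa}[\pa x\,g(t)]$ and $\part{}{x}\Ffun=\frac{g(t)}{1-\pb x\,g(t)}\,\pexp_\frac{\pb}{\pa}[\pa x\,g(t)]$. Multiplying the first by $g(t)$ and the second by $x\,g'(t)$ produces the same expression $\frac{x\,g(t)\,g'(t)}{1-\pb x\,g(t)}\,\pexp_\frac{\pb}{\pa}[\pa x\,g(t)]$, so the identity holds and extracting the coefficient of $\tn{n}$ finishes the argument. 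The only genuinely delicate point is the bookkeeping: matching the shifted index ranges on the two sides to the respective Cauchy products and confirming that the $m=0$ contributions drop out; the analytic content is immediate from the formulas derived earlier, so no new computation with $\bexp$ or its derivative is required.
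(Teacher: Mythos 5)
Your proof is correct and is essentially the paper's own argument: the paper likewise observes that multiplying \eqref{hw34} by $\sum_{n=1}^\infty\pc^{n-1}\W_n\frdc\tn{n}$ and \eqref{hw44} by $x\sum_{n=0}^\infty\pc^n\W_{n+1}\frdc\tn{n}$ yields the same expression, i.e.\ exactly your identity $g(t)\,\part{}{t}\Ffun=x\,g'(t)\,\part{}{x}\Ffun$, and then extracts coefficients via the Cauchy product. The only cosmetic difference is that you work from the pre-substitution forms \eqref{Uderivt1} and \eqref{Uderivx1} rather than \eqref{hw34}--\eqref{hw44}, which differ only by replacing $\pexp_{\pb/\pa}[\pa x\,g(t)]$ with $\pa\Ffun+1-\delta_{\pa,1}$.
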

\begin{proof}
After comparing equations \eqref{hw34} and \eqref{hw44}, we multiply them respectively by $\sum_{n=1}^\infty\pc^{n-1}\W_n\frdc\tn{n}$ 
 by $x\sum_{n=0}^\infty\pc^n\W_{n+1}\frdc\tn{n}$. Thus we obtain
 \begin{align}
 \sum_{n=0}\Uxp{{n+1}}{\pv}\tn{n}\cdot\sum_{n=1}^\infty\pc^{n-1}\W_n\frdc\tn{n}=\sum_{n=0}^\infty\part{}{x}\Uxp{n}{\pv}\tn{n}\cdot x\sum_{n=0}^\infty\pc^n\W_{n+1}\frdc\tn{n},\nn
 \end{align}
 which leads to
\begin{align}
\sum_{n=1}^\infty\sum_{m=0}^{n-1}\binom{n}{m}\pc^{n-m-1}&\W_{n-m}\frdc\Uxp{{m+1}}{\pv}\tn{n}\nn\\
&=\sum_{n=0}^\infty\sum_{m=0}^{n}\binom{n}{m}x\pc^{n-m}\W_{n-m+1}\frdc\part{}{x}\Uxp{m}{\pv}.
\end{align}
Extracting the coefficients of $\tn{n}$ on both sides completes the proof.
\end{proof}
We are ready to state the following theorem.
\begin{theorem} 
For all $n\in\mathbb{N}$, the polynomials $\Uxp{n}{\pv}$ satisfy the following recurrence relation
\begin{align}
\Uxp{{n+1}}{\pv}=(\pa x+\pd n)\Uxp{n}{\pv}+x(\pb x +\pc)\part{}{x}\Uxp{n}{\pv},\label{mainrec}
\end{align}
with the initial condition $\Uxp{1}{\pv}=x$.
\end{theorem}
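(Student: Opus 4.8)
The plan is to turn the recurrence into a single first-order linear partial differential equation for the generating function $\Ffun(x,t;\pv)$ of \eqref{Ffun}--\eqref{GF:UGF}, and then read off \eqref{mainrec} by extracting the coefficient of $\tn{n}$. To keep the chain rule manageable I would fix the abbreviations $g=g(t)=\frac{1}{\pc}\nexp{\frac{\pd}{\pc}}(\pc t)-\frac{1}{\pc}$ and $u=\pa x g$, so that, writing $E:=\nexp{\frac{\pb}{\pa}}(u)$ and $D:=1-\pb x g=1-\tfrac{\pb}{\pa}u$, the defining equation \eqref{Ffun} reads $E=\pa\Ffun+1-\delta_{\pa,1}$.

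First I would compute the two first partials by the chain rule, using the derivative formula \eqref{bexp-deriv} in the form $\frac{d}{du}\nexp{\frac{\pb}{\pa}}(u)=E/D$. Since $\partial u/\partial t=\pa x g'$ and $\partial u/\partial x=\pa g$, the constant terms of $\Ffun$ dropping out, this gives $\part{\Ffun}{t}=\frac{E}{D}\,x g'$ and $\part{\Ffun}{x}=\frac{E}{D}\,g$. The one genuinely new ingredient is a differential identity for the inner function $g$: from \eqref{GF:eab} we have $\nexp{\frac{\pd}{\pc}}(\pc t)=(1-\pd t)^{-\pc/\pd}$, hence $\pc g+1=(1-\pd t)^{-\pc/\pd}$, and differentiating yields $(1-\pd t)g'=\pc g+1$.

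Next I would multiply $\part{\Ffun}{t}$ by $(1-\pd t)$ and substitute this identity, converting the $g'$ into $\pc g+1$, so that $(1-\pd t)\part{\Ffun}{t}=\pc x\frac{E}{D}g+x\frac{E}{D}=\pc x\part{\Ffun}{x}+x\frac{E}{D}$. To close the system I eliminate $E/D$: from $E=\frac{E}{D}(1-\pb x g)$ and $g\frac{E}{D}=\part{\Ffun}{x}$ one obtains $\frac{E}{D}=E+\pb x\part{\Ffun}{x}=\pa\Ffun+1-\delta_{\pa,1}+\pb x\part{\Ffun}{x}$. Feeding this back in produces the PDE
\[
(1-\pd t)\part{\Ffun}{t}=\pa x\,\Ffun+x(\pb x+\pc)\part{\Ffun}{x}+(1-\delta_{\pa,1})x.
\]

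Finally I would extract the coefficient of $\tn{n}$ from both sides via \eqref{GF:UGF}: the term $\part{\Ffun}{t}$ contributes $\Uxp{{n+1}}{\pv}$, the term $\pd t\part{\Ffun}{t}$ contributes $\pd n\,\Uxp{n}{\pv}$ after the index shift, and $\part{\Ffun}{x}$ contributes $\part{}{x}\Uxp{n}{\pv}$; for $n\geq1$ the constant correction $(1-\delta_{\pa,1})x$ disappears and rearranging gives exactly \eqref{mainrec}. I expect the only delicate points to be the bookkeeping of the $\delta_{\pa,1}$ term and the index shift that turns $\pd t\part{\Ffun}{t}$ into $\pd n\,\Uxp{n}{\pv}$. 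As a sanity check the $n=0$ coefficient should reproduce the initial condition $\Uxp{1}{\pv}=x$ using $\Uxp{0}{\pv}=\frac{\delta_{\pa,1}}{\pa}$ from Lemma~\ref{lemma1} (here the correction term earns its keep, supplying $(1-\delta_{\pa,1})x$ to complete $\delta_{\pa,1}x$ into $x$). The main obstacle is really just isolating and eliminating $E/D$ cleanly so that the $\pa x$, $\pc x\,\partial_x$, and $\pb x^2\,\partial_x$ contributions emerge with the correct coefficients; everything else is the chain rule and coefficient matching.
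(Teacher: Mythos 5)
Your proof is correct, and it takes a genuinely different route from the paper's. The paper never forms a closed partial differential equation: it works throughout at the level of formal power series, expanding the closed form into series, multiplying by $\left(1-\pb x\sum_{n\geq1}\pc^{n-1}\W_n\left(\frac{\pd}{\pc}\right)\frac{t^n}{n!}\right)(1-\pd t)$, opening everything with Cauchy products, extracting coefficients of $\frac{t^n}{n!}$, and then collapsing the resulting multiple sums by invoking Lemma~\ref{l2-4} twice together with the recurrence $\W_{n-m+1}\left(\frac{\pd}{\pc}\right)=\W_{n-m}\left(\frac{\pd}{\pc}\right)\left(\frac{(n-m)\pd}{\pc}+1\right)$. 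Your elimination of $E/D$ via $\frac{E}{D}=E+\pb x\,\part{\Ffun}{x}$ is precisely the functional-equation avatar of that machinery: indeed, your pair of chain-rule identities $\part{\Ffun}{t}=\frac{E}{D}xg'$ and $\part{\Ffun}{x}=\frac{E}{D}g$ implies $g\,\part{\Ffun}{t}=xg'\,\part{\Ffun}{x}$, which is exactly Lemma~\ref{l2-4} read off coefficientwise, and your identity $(1-\pd t)g'=\pc g+1$ is the paper's step from \eqref{Uderivt1} to \eqref{p2005-eq1} in disguise. What your approach buys is economy and transparency: a single linear PDE
\begin{align*}
(1-\pd t)\part{\Ffun}{t}=\pa x\,\Ffun+x(\pb x+\pc)\part{\Ffun}{x}+(1-\delta_{\pa,1})x,
\end{align*}
one coefficient extraction, no Cauchy products, and a built-in verification of the initial condition at $n=0$ (where the $(1-\delta_{\pa,1})x$ term combines with $\pa x\,\Uxp{0}{\pv}=\delta_{\pa,1}x$ to give $\Uxp{1}{\pv}=x$); moreover the PDE itself immediately re-derives the coefficient recurrence of Proposition~\ref{prop2-9}. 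What the paper's route buys is that it stays entirely inside the ring of formal power series with the already-established Lemmas~\ref{l2-3} and \ref{l2-4}, never manipulating the fractional-power closed forms $(1-\pd t)^{-\pc/\pd}$ — a point worth a remark in your write-up, since when $\pd=0$ or $\pc=0$ your identity $(1-\pd t)g'=\pc g+1$ should be justified as a limit under condition \eqref{eab:cond}, exactly as the paper prescribes for degenerate parameter values (the identity does survive both limits, so this is a presentational gap, not a mathematical one).
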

\begin{proof}
Let us consider \eqref{Uderivt1}, where we obtained that
\begin{align}
\sum_{n=0}^\infty\Uxp{{n+1}}{\pv}\tn{n}&=\frac{1}{\pa}\cdot\frac{\pexp_\frac{\pb}{\pa}\left[\pa x\left(\frac{1}{\pc}\pexp_\frac{\pd}{\pc}(\pc t)-\frac{1}{\pc}\right)\right]}{1-\frac{\pb}{\pa}\left[\pa x\left(\frac{1}{\pc}\pexp_\frac{\pd}{\pc}(\pc t)-\frac{1}{\pc}\right)\right]}\cdot\pa x\part{}{t}\left(\frac{1}{\pc}\pexp_\frac{\pd}{\pc}(\pc t)-\frac{1}{\pc}\right).\nn
\end{align}
By applying  \eqref{GF:bexp}-\eqref{Wn}, \eqref{bexp-deriv}, and \eqref{Ffun}, we get
\begin{align}
\sum_{n=0}^\infty\Uxp{{n+1}}{\pv}\tn{n}&=\frac{\pa\Ffun(x,t;\pv)+1-\delta_{\pa,1}}{1-\frac{\pb}{\pa}\left[\pa x\left(\frac{1}{\pc}\sum_{n=1}^\infty\pc^n\W_{n}\left(\frac{\pd}{\pc}\right)\tn{n}\right)\right]}\cdot \pc x\frac{\frac{1}{\pc}\pexp_\frac{\pd}{\pc}(\pc t)}{1-\frac{\pd}{\pc}\cdot\pc t}
\nn\\
&=\frac{\pa\Ffun(x,t;\pv)+1-\delta_{\pa,1}}{1-\pb x\sum_{n=1}^\infty\pc^{n-1}\W_{n}\left(\frac{\pd}{\pc}\right)\tn{n}}\cdot  x\frac{\pexp_\frac{\pd}{\pc}(\pc t)}{1-\pd  t}.\nn
\end{align}
Applying again \eqref{GF:bexp}-\eqref{Wn} gives that
\begin{align}
\sum_{n=0}^\infty\Uxp{{n+1}}{\pv}\tn{n}&=\frac{\pa\Ffun(x,t;\pv)+1-\delta_{\pa,1}}{1-\pb x\sum_{n=1}^\infty\pc^{n-1}\W_{n}\left(\frac{\pd}{\pc}\right)\tn{n}}\cdot  x\frac{\sum_{n=0}^\infty\pc^{n}\W_{n}\left(\frac{\pd}{\pc}\right)\tn{n}}{1-\pd  t}.\label{p2005-eq1}
\end{align}
By multiplying \eqref{p2005-eq1} by 
$\left(1-\pb x\sum\limits_{n=1}^\infty\pc^{n-1}\W_{n}\left(\dfrac{\pd}{\pc}\right)
\displaystyle{\tn{n}}\right)(1-\pd  t)$ and substituting \eqref{GF:UGF}, we obtain
\begin{align}
\sum_{n=0}^\infty\Uxp{{n+1}}{\pv}\tn{n}&\cdot\left(1-\pb x\sum_{n=1}^\infty\pc^{n-1}\W_{n}\left(\frac{\pd}{\pc}\right)\tn{n}\right)(1-\pd  t)\nn\\
&=\left(\pa\sum_{n=0}^\infty\Uxp{n}{\pv}\tn{n}+1-\delta_{\pa,1}\right)\cdot  x\sum_{n=0}^\infty\pc^{n}\W_{n}\left(\frac{\pd}{\pc}\right)\tn{n}.\nn
\end{align}
After opening parenthesis and applying the Cauchy product, we get
\begin{align} \label{p2005-eq3}
\sum_{n=0}^\infty\Uxp{{n+1}}{\pv}\tn{n}&
-\pb x\sum_{n=1}^\infty\sum_{m=0}^{n-1}\binom{n}{m}\Uxp{{m+1}}{\pv}\pc^{n-m-1}\W_{n-m}\frdc\tn{n}\\
&-\pd t\sum_{n=0}^\infty\Uxp{{n+1}}{\pv}\tn{n}\nn\\
&+\pb\pd xt\sum_{n=1}^\infty\sum_{m=0}^{n-1}\binom{n}{m}\Uxp{{m+1}}{\pv}\pc^{n-m-1}\W_{n-m}\frdc\tn{n}\nn\\
&=\pa x \sum_{n=0}^\infty\sum_{m=0}^{n}\binom{n}{m}\Uxp{m}{\pv}\pc^{n-m}\W_{n-m}\frdc\tn{n}\nn\\
& +(1-\delta_{\pa,1})x\sum_{n=0}^\infty\pc^{n}\W_{n}\frdc\tn{n}.\nn
\end{align}
By extracting the coefficients of $\tn{n}$ on both sides and rearranging the terms, we obtain for $n\geq 1$ that
\begin{align}\label{hw56-57}
\Uxp{{n+1}}{\pv}&=
\pb x\sum_{m=0}^{n-1}\binom{n}{m}\Uxp{{m+1}}{\pv}\pc^{n-m-1}\W_{n-m}\frdc+\pd n\Uxp{n}{\pv}\\
&-\pb\pd nx\sum_{m=0}^{n-2}\binom{n-1}{m}\Uxp{{m+1}}{\pv}\pc^{n-m-2}\W_{n-m-1}\frdc\nn\\
&+\pa x \sum_{m=0}^{n}\binom{n}{m}\Uxp{m}{\pv}\pc^{n-m}\W_{n-m}\frdc \nn\\
& +(1-\delta_{\pa,1})x\pc^{n}\W_{n}\frdc.\nn
\end{align}
Let us consider the terms on the third and fourth lines of \eqref{hw56-57}. We have
\begin{align}\nn
\Uxp{{n+1}}{\pv}&=
\pb x\sum_{m=0}^{n-1}\binom{n}{m}\Uxp{{m+1}}{\pv}\pc^{n-m-1}\W_{n-m}\frdc+\pd n\Uxp{n}{\pv}\\
&-\pb\pd nx\sum_{m=0}^{n-2}\binom{n-1}{m}\Uxp{{m+1}}{\pv}\pc^{n-m-2}\W_{n-m-1}\frdc\nn\\
&+\pc x \Big[\part{}{x}\Uxp{n}{\pv}-\pb x\sum_{m=0}^{n-1}\binom{n}{m}\part{}{x}\Uxp{m}{\pv}\pc^{n-m-1}\W_{n-m}\frdc\Big] \nn\\
&+\pa x\Uxp{n}{\pv},
\nn
\end{align}
which can be written as
\begin{align}\label{hw59}
\Uxp{{n+1}}{\pv}&=\pa x\Uxp{n}{\pv}+\pc x\part{}{x}\Uxp{n}{\pv}+\pd n\Uxp{n}{\pv}\\
&-\pb\pc x^2\sum_{m=0}^{n-1}\binom{n}{m}\pc^{n-m-1}\W_{n-m}\frdc\part{}{x}\Uxp{m}{\pv}\nn\\
&+\pb x\sum_{m=0}^{n-1}\binom{n}{m}\Uxp{{m+1}}{\pv}\pc^{n-m-1}\W_{n-m}\frdc\nn\\
&-\pb\pd nx\sum_{m=0}^{n-2}\binom{n-1}{m}\Uxp{{m+1}}{\pv}\pc^{n-m-2}\W_{n-m-1}\frdc.\nn
\end{align}
We apply Lemma~\ref{l2-4} to the term on the third line of \eqref{hw59} and obtain
\begin{align}\label{hw61}
\Uxp{{n+1}}{\pv}&=\pa x\Uxp{n}{\pv}+\pc x\part{}{x}\Uxp{n}{\pv}+\pd n\Uxp{n}{\pv}\\
&-\pb x^2\sum_{m=0}^{n-1}\binom{n}{m}\pc^{n-m}\W_{n-m}\frdc\part{}{x}\Uxp{m}{\pv}\nn\\
&+\pb x^2\sum_{m=0}^{n}\binom{n}{m}\part{}{x}\Uxp{{m}}{\pv}\pc^{n-m}\W_{n-m+1}\frdc\nn\\
&-\pb\pd nx\sum_{m=0}^{n-2}\binom{n-1}{m}\Uxp{{m+1}}{\pv}\pc^{n-m-2}\W_{n-m-1}\frdc.\nn
\end{align}
Let us consider the terms on the second and the third lines of \eqref{hw61}.  By applying \eqref{Wn} and extracting a common factor, their sum can be written as 
\begin{align}
\pb x^2\sum_{m=0}^{n-1}\binom{n}{m}&\pc^{n-m}\W_{n-m}\frdc\part{}{x}\Uxp{m}{\pv}\cdot\left[\frac{(n-m)\pd}{\pc}+1-1\right]+\pb x^2\part{}{x}\Uxp{n}{\pv}\nn\\
&=\pb x^2\sum_{m=0}^{n-1}\binom{n}{m}\pc^{n-m-1}\W_{n-m}\frdc\part{}{x}\Uxp{m}{\pv}\cdot(n-m)\pd\nn\\
&+\pb x^2\part{}{x}\Uxp{n}{\pv}\nn\\
&=\pb \pd nx^2\sum_{m=0}^{n-1}\binom{n-1}{m}\pc^{n-m-1}\W_{n-m}\frdc\part{}{x}\Uxp{m}{\pv}+\pb x^2\part{}{x}\Uxp{n}{\pv}.\nn
\end{align}
We substitute this expression into \eqref{hw61} and obtain
\begin{align}\label{hw64}
\Uxp{{n+1}}{\pv}&=\pa x\Uxp{n}{\pv}+\pc x\part{}{x}\Uxp{n}{\pv}+\pd n\Uxp{n}{\pv}+\pb x^2\part{}{x}\Uxp{n}{\pv}\\ 
&+\pb \pd nx^2\sum_{m=0}^{n-1}\binom{n-1}{m}\pc^{n-m-1}\W_{n-m}\frdc\part{}{x}\Uxp{m}{\pv}\nn\\ 
&-\pb\pd nx\sum_{m=0}^{n-2}\binom{n-1}{m}\Uxp{{m+1}}{\pv}\pc^{n-m-2}\W_{n-m-1}\frdc.\nn 
\end{align}
Finally, applying the Lemma~\ref{l2-4} to  the term on the second line of \eqref{hw64}, we obtain
\begin{align}\label{hw64}
\Uxp{{n+1}}{\pv}&=\pa x\Uxp{n}{\pv}+\pc x\part{}{x}\Uxp{n}{\pv}+\pd n\Uxp{n}{\pv}+\pb x^2\part{}{x}\Uxp{n}{\pv}\\ 
&+\pb \pd nx\sum_{m=0}^{n-2}\binom{n-1}{m}\pc^{n-m-2}\W_{n-m-1}\frdc\Uxp{m+1}{\pv}\nn\\
&-\pb\pd nx\sum_{m=0}^{n-2}\binom{n-1}{m}\Uxp{{m+1}}{\pv}\pc^{n-m-2}\W_{n-m-1}\frdc,\nn
\end{align}
and an obvious simplification completes the proof.
\end{proof}

We turn to consider the coefficients of the polynomials $\Uxp{n}{\pv}$. 
Let $\Uc{n}{m}{\pv}:=[x^m]\Uxp{n}{\pv}$, so we have
\begin{align}
\Uxp{n}{\pv}=\sum_{m=1}^n\Uc{n}{m}{\pv}x^m.\label{Upol}
\end{align}
Note, that $\Uc{n}{m}{\pv}$ is a polynomial in the variables $\pa$, $\pb$, $\pc$, and $\pd$. Moreover, for consistency, we define $\Uc{0}{0}{\pv}:=\Uxp{0}{\pv}=\frac{\delta_{\pa,1}}{\pa}$.
\begin{prop}\label{prop2-9}
For all $n, m\in\mathbb{N}$, we have
\begin{align}
\Uc{{n+1}}{m}{\pv}=\left\{\begin{array}{ll}
(\pc+n\pd)\Uc{n}{1}{\pv}, & m=1,\\
(\pa+(m-1)\pb)\Uc{n}{{m-1}}{\pv}+(m\pc+n\pd)\Uc{n}{m}{\pv}, & 2\leq m\leq n,\\
(\pa+n\pb)\Uc{n}{n}{\pv}, & m=n+1,\\
0, & \text{otherwise},
\end{array}\right.
\end{align}
with initial conditions $\Uc{0}{0}{\pv}=\frac{\delta_{\pa,1}}{\pa}$, $\Uc{1}{1}{\pv}=1$.
\end{prop}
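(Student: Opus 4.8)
The plan is to feed the polynomial expansion \eqref{Upol} into the differential recurrence \eqref{mainrec} and then extract the coefficient of $x^m$ on each side. Because \eqref{mainrec} writes $\Uxp{{n+1}}{\pv}$ as a combination of $\Uxp{n}{\pv}$ and $\part{}{x}\Uxp{n}{\pv}$ with coefficients that are monomials in $x$, this reduces to a finite, purely mechanical computation; the only care needed is in the index shifts and in the behaviour at the two endpoints $m=1$ and $m=n+1$.

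First I would rewrite \eqref{mainrec} as
\[
\Uxp{{n+1}}{\pv}=\pa x\,\Uxp{n}{\pv}+\pd n\,\Uxp{n}{\pv}+\pb x^2\,\part{}{x}\Uxp{n}{\pv}+\pc x\,\part{}{x}\Uxp{n}{\pv},
\]
and substitute $\Uxp{n}{\pv}=\sum_{k=1}^n\Uc{n}{k}{\pv}x^k$. The four summands contribute, respectively, $\pa\sum_k\Uc{n}{k}{\pv}x^{k+1}$, $\pd n\sum_k\Uc{n}{k}{\pv}x^{k}$, $\pb\sum_k k\,\Uc{n}{k}{\pv}x^{k+1}$, and $\pc\sum_k k\,\Uc{n}{k}{\pv}x^{k}$. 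Gathering the terms whose exponent equals $m$ — the first and third summands at $k=m-1$, the second and fourth at $k=m$ — yields the single identity
\[
\Uc{{n+1}}{m}{\pv}=\bigl(\pa+(m-1)\pb\bigr)\Uc{n}{{m-1}}{\pv}+(m\pc+n\pd)\Uc{n}{m}{\pv}.
\]

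It then remains to match this one formula against the four cases of the statement. For $2\le m\le n$ both indices on the right are in range and the formula is exactly the second case. For $m=1$ the factor $\Uc{n}{0}{\pv}$ vanishes by Lemma~\ref{lemma_zero-term}, leaving $(\pc+n\pd)\Uc{n}{1}{\pv}$; for $m=n+1$ the factor $\Uc{n}{{n+1}}{\pv}$ vanishes since $deg_x\Uxp{n}{\pv}=n$ by Proposition~\ref{prop-degrees}, and because $m-1=n$ there the surviving term is $(\pa+n\pb)\Uc{n}{n}{\pv}$; for every other $m$ both referenced coefficients are zero, so $\Uc{{n+1}}{m}{\pv}=0$. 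The stated initial conditions are immediate: $\Uc{1}{1}{\pv}=[x^1]\,\Uxp{1}{\pv}=1$ because $\Uxp{1}{\pv}=x$, and $\Uc{0}{0}{\pv}=\Uxp{0}{\pv}=\frac{\delta_{\pa,1}}{\pa}$ by Lemma~\ref{lemma1}. I do not anticipate any real obstacle here; what little difficulty there is lies entirely in aligning the index ranges with the two degenerate endpoints so that the compact four-case formula emerges.
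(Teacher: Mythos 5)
Your proposal is correct and follows essentially the same route as the paper: substitute the expansion \eqref{Upol} into the recurrence \eqref{mainrec}, extract the coefficient of $x^m$, and resolve the endpoint cases $m=1$ and $m=n+1$ (the paper splits into cases directly, while you state one unified formula and let the out-of-range coefficients vanish via Lemma~\ref{lemma_zero-term} and Proposition~\ref{prop-degrees}, which is a cosmetic difference only).
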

\begin{proof}
We defined $\Uc{{n+1}}{m}{\pv}$ as the coefficients of the polynomial $\Uxp{{n+1}}{\pv}$. We proved in the Proposition~\ref{prop-degrees} that $\Uxp{{n+1}}{\pv}$ is a polynomial of degree $n+1$ in the variable $x$. Moreover, in the Lemma~\ref{lemma_zero-term}, we show that the coefficient of $x^0$ is 0 for all $n\in\mathbb{N}$.Therefore one can immediately conclude that  $\Uc{{n+1}}{m}{\pv}=0$ in case $m\notin\{1,\ldots,n+1\}$.  
 Let us substitute \eqref{Upol} into \eqref{mainrec}.
\begin{align}
\sum_{m=1}^{n+1}\Uc{{n+1}}{m}{\pv}x^m&=(\pa x+\pd n)\sum_{m=1}^n\Uc{n}{m}{\pv}x^m+x(\pb x +\pc)\part{}{x}\sum_{m=1}^n\Uc{n}{m}{\pv}x^m\nn\\
&=(\pa x+\pd n)\sum_{m=1}^n\Uc{n}{m}{\pv}x^m+(\pb x +\pc)\sum_{m=1}^nm\Uc{n}{m}{\pv}x^m.\nn
\end{align}
Extracting the coefficients of $x^m$ on both sides gives for $m=1$
\begin{align}
\Uc{{n+1}}{1}{\pv}=\pd n\Uc{n}{m}{\pv}+\pc \Uc{n}{m}{\pv},\nn
\end{align}
for $2\leq m\leq n$
\begin{align}
\Uc{{n+1}}{m}{\pv}=\pa \Uc{n}{m-1}{\pv}+\pd n\Uc{n}{m}{\pv}+\pb  (m-1)\Uc{n}{{m-1}}{\pv}+\pc m\Uc{n}{m}{\pv},\nn
\end{align}
and for $m=n+1$
\begin{align}
\Uc{{n+1}}{{n+1}}{\pv}=\pa \Uc{n}{n}{\pv}+\pb n\Uc{n}{n}{\pv}.\nn
\end{align}
Rearrangement of the terms completes the proof.
\end{proof}
The generating function $\Ffun(x,t;\pv)$ defined by equation \eqref{GF:UGF}, satisfies the following differential equations.
\begin{theorem} Let $n\in\mathbb{N}$. Then the differential equation
\begin{align}
f^{(n)}=\sum_{m=1}^n \Uc{{n}}{m}{\pv}\pa^{1+m}x^m(1-\pd t)^{-m\frac{\pc}{\pd} -n}\left(f+\frac{1}{\pa}-\frac{\delta_{\pa,1}}{\pa}\right)^{1+m\frac{\pb}{\pa}},
\end{align}
where $f^{(n)}=\frac{\partial^n}{\partial t^n}f$, $\pv=(\pa,\pb,\pc,\pd)$, has a solution $f=\Ffun(x,t;\pv)$.
\end{theorem}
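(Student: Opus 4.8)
The plan is to prove the identity by induction on $n$, obtaining a closed form for $f^{(n)}$ and differentiating it term by term, then matching the emerging coefficients against the recurrence of Proposition~\ref{prop2-9}. First I would strip off the $t$-independent part: set $g:=f+\frac{1}{\pa}-\frac{\delta_{\pa,1}}{\pa}$, so that $g=\frac{1}{\pa}\pexp_\frac{\pb}{\pa}\left[\pa x\left(\frac{1}{\pc}\pexp_\frac{\pd}{\pc}(\pc t)-\frac{1}{\pc}\right)\right]$ and, since the subtracted terms do not depend on $t$, one has $f^{(n)}=\frac{\partial^n}{\partial t^n}g$ for every $n\ge1$. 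Writing $E:=\pexp_\frac{\pd}{\pc}(\pc t)=(1-\pd t)^{-\frac{\pc}{\pd}}$ and applying the derivative rule \eqref{bexp-deriv} with the chain rule (exactly as in the proof of Theorem~\ref{thm2}), a direct computation gives the basic relation
\begin{align}
\part{g}{t}=\frac{xE}{1-\pd t}\,(\pa g)^{1+\frac{\pb}{\pa}}.\nn
\end{align}
I would also record the two elementary rules $\frac{\partial}{\partial t}(1-\pd t)^{-s}=s\pd(1-\pd t)^{-s-1}$ and $\frac{\partial}{\partial t}(\pa g)^{r}=r\pa\,\dfrac{xE}{1-\pd t}\,(\pa g)^{r+\frac{\pb}{\pa}}$ (the latter obtained by combining the previous display with the chain rule), together with $\dfrac{E}{1-\pd t}=(1-\pd t)^{-\frac{\pc}{\pd}-1}$.

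The inductive step proceeds from the hypothesis that
\begin{align}
f^{(n)}=\sum_{m=1}^{n}\Uc{n}{m}{\pv}\,x^m(1-\pd t)^{-m\frac{\pc}{\pd}-n}(\pa g)^{1+m\frac{\pb}{\pa}}.\nn
\end{align}
Differentiating in $t$, each summand splits into two pieces: differentiating $(1-\pd t)^{-m\frac{\pc}{\pd}-n}$ produces the scalar $m\pc+n\pd$ and keeps $x^m$ and the $(\pa g)$-power, raising the order to $n+1$; differentiating $(\pa g)^{1+m\frac{\pb}{\pa}}$ produces the scalar $(1+m\frac{\pb}{\pa})\pa=\pa+m\pb$, one extra factor $x$, and one extra factor $(1-\pd t)^{-\frac{\pc}{\pd}-1}$, thereby shifting the summand to index $m+1$. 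Collecting the coefficient of $x^{m}(1-\pd t)^{-m\frac{\pc}{\pd}-(n+1)}(\pa g)^{1+m\frac{\pb}{\pa}}$ then yields
\begin{align}
(m\pc+n\pd)\Uc{n}{m}{\pv}+(\pa+(m-1)\pb)\Uc{n}{{m-1}}{\pv},\nn
\end{align}
with boundary contributions $(\pc+n\pd)\Uc{n}{1}{\pv}$ at $m=1$ and $(\pa+n\pb)\Uc{n}{n}{\pv}$ at $m=n+1$. By Proposition~\ref{prop2-9} this is exactly $\Uc{{n+1}}{m}{\pv}$, and since the base case reduces to $f^{(1)}=x(1-\pd t)^{-\frac{\pc}{\pd}-1}(\pa g)^{1+\frac{\pb}{\pa}}$ (so the coefficient is $\Uc{1}{1}{\pv}=1$), the hypothesis holds for all $n$.

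It then remains only to rewrite the factor $(\pa g)^{1+m\frac{\pb}{\pa}}$ in the form appearing in the statement. Since $\pa g=\pa f+1-\delta_{\pa,1}=\pa\left(f+\frac{1}{\pa}-\frac{\delta_{\pa,1}}{\pa}\right)$, one has $(\pa g)^{1+m\frac{\pb}{\pa}}=\pa^{1+m\frac{\pb}{\pa}}\left(f+\frac{1}{\pa}-\frac{\delta_{\pa,1}}{\pa}\right)^{1+m\frac{\pb}{\pa}}$, and substituting this into the inductive form produces the desired closed expression for $f^{(n)}$.

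The bookkeeping of the powers of $\pa$ is the step I expect to be the most delicate, and I would verify it against the low-order cases of Example~\ref{ex2-3}. The derivative rule \eqref{bexp-deriv} for the degenerate exponential forces the prefactor on $\left(f+\frac{1}{\pa}-\frac{\delta_{\pa,1}}{\pa}\right)^{1+m\frac{\pb}{\pa}}$ to carry the exponent $1+m\frac{\pb}{\pa}$ rather than a pure integer power; this is the point at which the reconciliation with the displayed factor $\pa^{1+m}$ must be examined with care (and, if warranted, the displayed exponent replaced by $\pa^{1+m\frac{\pb}{\pa}}$). Apart from this normalization, the argument is a routine induction whose only genuine inputs are the degenerate-exponential derivative \eqref{bexp-deriv} and the coefficient recurrence of Proposition~\ref{prop2-9}.
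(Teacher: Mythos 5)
Your proof follows the same route as the paper's --- induction on $n$, with the two pieces of the term-by-term derivative matched against the recurrence of Proposition~\ref{prop2-9} --- so methodologically there is nothing to add. What matters is the normalization you flagged at the end: it is not a bookkeeping point to be reconciled, it is an error in the paper's statement and proof, and your version is the correct one. The paper's base case \eqref{first-der} asserts the prefactor $\pa^{2}$, whereas \eqref{bexp-deriv} and the chain rule give exactly your relation
\begin{align*}
\Ffun^{(1)}=\pa^{1+\frac{\pb}{\pa}}\,x\,(1-\pd t)^{-\frac{\pc}{\pd}-1}\left(\Ffun+\frac{1}{\pa}-\frac{\delta_{\pa,1}}{\pa}\right)^{1+\frac{\pb}{\pa}},
\end{align*}
and $\pa^{1+\pb/\pa}=\pa^{2}$ essentially only when $\pa=1$ or $\pb=\pa$. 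Concretely, for $\pa=2$, $\pb=0$, $\pc=\pd=1$ one has $\Ffun=\frac{1}{2}\pexp^{u}-\frac{1}{2}$ with $u=\frac{2xt}{1-t}$, and direct differentiation gives $\Ffun^{(1)}=x(1-t)^{-2}\pexp^{u}$, which matches your formula but is only half of what \eqref{first-der} claims. So the theorem must be stated as you prove it: with $\pa^{1+m}$ replaced by $\pa^{1+m\frac{\pb}{\pa}}$, equivalently with $m$-th summand $\Uc{n}{m}{\pv}\,x^m(1-\pd t)^{-m\frac{\pc}{\pd}-n}\left(\pa f+1-\delta_{\pa,1}\right)^{1+m\frac{\pb}{\pa}}$. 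Your inductive step is then identical in structure to the paper's, which is internally consistent --- its defect is confined to the base case, and it vanishes in the special case $\pa=\pc=1$ of \cite{Kim} that this theorem generalizes, which is presumably how it escaped notice. Two further typos in the paper's closing display, which your write-up gets right: the sum there should run to $m=n+1$, and the exponent of $(1-\pd t)$ should be $-m\frac{\pc}{\pd}-(n+1)$.
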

\begin{proof}
Let $f=\Ffun(x,t;\pa,\pb,\pc,\pd)$. The proof is by induction on $n$. Differentiating w.r.t. $t$ and applying the definitions \eqref{GF:UGF}, \eqref{GF:bexp}, and the property \eqref{bexp-deriv}, we obtain that
\begin{align}\label{first-der}
\Ffun^{(1)}=\pa^2x(1-\pd t)^{-\frac{\pc}{\pd}-1}\left(F+\frac{1}{\pa}-\frac{\delta_{\pa,1}}{\pa}\right)^{1+\frac{\pb}{\pa}},
\end{align}
and, since $\Uc{1}{1}{\pv}=1$, we obtain the Theorem's statement. Assume that the $(n)$th derivative has the form of
\begin{align*}
\Ffun^{(n)}=\sum_{m=1}^n\Uc{n}{m}{\pv}\pa^{1+m}x^m(1-\pd t)^{-m\frac{\pc}{\pd}-n}\left(\Ffun+\frac{1}{\pa}-\frac{\delta_{\pa,1}}{\pa}\right)^{1+m\frac{\pb}{\pa}}.
\end{align*} 
Let us consider the $(n+1)$th derivative.
\begin{align*}
\Ffun^{(n+1)}&=\sum_{m=1}^n\Uc{n}{m}{\pv}\pa^{1+m}x^m\pd\left(m\frac{\pc}{\pd}+n\right)(1-\pd t)^{-m\frac{\pc}{\pd}-n}\left(\Ffun+\frac{1}{\pa}-\frac{\delta_{\pa,1}}{\pa}\right)^{1+m\frac{\pb}{\pa}}\\
&+\sum_{m=1}^n\Uc{n}{m}{\pv}\pa^{1+m}x^m(1-\pd t)^{-m\frac{\pc}{\pd}-n}\left(\Ffun+\frac{1}{\pa}-\frac{\delta_{\pa,1}}{\pa}\right)^{m\frac{\pb}{\pa}}\left(1+m\frac{\pb}{\pa}\right)\Ffun^{(1)}\\
&=\sum_{m=1}^n(m\pc+n\pd)\Uc{n}{m}{\pv}\pa^{1+m}x^m(1-\pd t)^{-m\frac{\pc}{\pd}-(n+1)}\left(\Ffun+\frac{1}{\pa}-\frac{\delta_{\pa,1}}{\pa}\right)^{1+m\frac{\pb}{\pa}}\\
&+\sum_{m=1}^n(\pa+m\pb)\Uc{n}{m}{\pv}\pa^{2+m}x^{m+1}(1-\pd t)^{-(m+1)\frac{\pc}{\pd}-n}\left(\Ffun+\frac{1}{\pa}-\frac{\delta_{\pa,1}}{\pa}\right)^{1+(m+1)\frac{\pb}{\pa}}\\
&=\sum_{m=1}^n(m\pc+n\pd)\Uc{n}{m}{\pv}\pa^{1+m}x^m(1-\pd t)^{-m\frac{\pc}{\pd}-(n+1)}\left(\Ffun+\frac{1}{\pa}-\frac{\delta_{\pa,1}}{\pa}\right)^{1+m\frac{\pb}{\pa}}\\
&+\sum_{m=2}^{n+1}(\pa+(m-1)\pb)\Uc{n}{m-1}{\pv}\pa^{1+m}x^{m}(1-\pd t)^{-m\frac{\pc}{\pd}-n}\left(\Ffun+\frac{1}{\pa}-\frac{\delta_{\pa,1}}{\pa}\right)^{1+m\frac{\pb}{\pa}}\\
&=\sum_{m=1}^na_{n+1,m}\pa^{1+m}x^m(1-\pd t)^{-m\frac{\pc}{\pd}-n}\left(\Ffun+\frac{1}{\pa}-\frac{\delta_{\pa,1}}{\pa}\right)^{1+m\frac{\pb}{\pa}},
\end{align*}
where, by comparing to the recurrence relation from the Proposition \ref{prop2-9},
\begin{align*}
a_{n+1,m}=(m\pc+n\pd)\Uc{n}{m}{\pv}+(\pa+(m-1)\pb)\Uc{n}{m-1}{\pv}\equiv\Uc{n+1}{m}{\pv}, 
\end{align*}
which completes the proof.
\end{proof}
\begin{remark}
This Theorem generalizes the results of the Theorem 1.1 from \cite{Kim}.
\end{remark}

\section{Combinatorics of set partitions}\label{Section-Comb}
In this section we consider different kinds of set partitions, define two new statistics for them, and study their connections to the polynomials defined in the previous section. We start from some definitions.

An unordered  partition of a set $[n]$ into $k$ blocks is a set of nonempty disjoint subsets $B_1, \ldots, B_k\subseteq[n]$, whose union is $[n]$. A minimal element of a block is called an \emph{opener} (standard opener). Partition can be ordered. In his work  \cite{Motzkin1971}, Motzkin defined names for four different kinds of ordering of set partitions, namely, sets of sets, sets of lists, lists of sets, lists of lists. In such definition, set is standing for an unordered collection of elements, while list is an ordered collection of elements. A generalized exponential generating function defined and studied in \cite{Herscovici2017, Herscovici2019} unified those four kinds of set partitions. Combinatorially, the generalization was based on two statistics proposed in \cite{Herscovici2017}. However, there are more kinds of set partitions.

Here we consider the partitions of kind ``list of lists'' (see \cite{Motzkin1971}), where both the blocks and the elements inside of them are ordered.
Moreover, we will consider four statistics to classify set partitions. Two of them, namely, $\nsb$ and $\nse$ were defined in \cite{Herscovici2017}. Considering a sequence of blocks openers, statistic $\nsb$ says how many blocks should be moved to the right in order to get the increasing sequence of openers. Statistic $\nse$ considers sequences of elements inside the blocks and says how many, in total, elements should be moved inside the blocks in order to get inside each block an increasing sequence of its elements. Additionally to them, we define two new statistics -- $\rlb$ and $\rle$, which will be described below. It was shown in \cite{Herscovici2019} that the statistic $\nse$ is closely related to the statistic counting right-to-left-minima. Given permutation $\sigma=\sigma_1\sigma_2\cdots\sigma_n$ of the set $[n]$. An element $\sigma_j$ is called right-to-left minimum if $\sigma_j<\sigma_m$ for all $m>j$. Obviously, the element $\sigma_n$ is always a right-to-left minimum. Moreover, assuming $\sigma_j=1$, we have that  there is no right-to-left minima among $\sigma_1,\sigma_2,\ldots,\sigma_{j-1}$. Therefore, we define the statistic $\rle$ (``right-to-left elements'') as following.
\begin{align}
\rle(\sigma)=|\{j\big|(\sigma_j<\sigma_m \text{ for all } m>j)\text{ and } (\sigma_j>1)\}|.\label{rle-stat}
\end{align}
Given partition $\pi=B_1/B_2/\cdots/B_k$ of the set $[n]$ into $k$ blocks. We can consider an ordered sequence of its blocks' openers as a permutation. The define the statistic $\rlb$ (``right-to-left blocks'') as following.
\begin{align}
\rlb(\pi)=|\{j\big|(\min B_j<\min B_m \text{ for all } m>j)\text{ and } (\min B_j>1)\}|,\label{rlb-stat}
\end{align}
where by $\min B_m$ we denote the minimal element of the block $B_m$, i.e. the opener of the block $m$. The statistic $\rle$ can be adopted for partitions, where the elements of a block are considered as a permutation and the minimal element of permutation (which is one in standard permutation) is the minimal element of the block, so given $B_\ell=\sigma_1\cdots\sigma_{n_\ell}$ we are able to evaluate $\rle(B_\ell)$ as following
\begin{align}
\rle(B_\ell)=|\{j\big|(\sigma_j<\sigma_m \text{ for all } m>j)\text{ and } (\sigma_j>\min (B_\ell))\}|,\label{rle-block}
\end{align}
 and $\rle(\pi)=\sum_{j=1}^k\rle(B_j)$. Note that $\nsb(\pi)+\rlb(\pi)=k-1$, while $\nse(\pi)+\rle(\pi)=n-k$.

\begin{example} Let us consider a set partition $\pi=382/147/96/5$. Its openers are $2,1,6,5$. This sequence will be standard ordered if the first and the third blocks will be moved to the right as following $\pi=147/382/5/96$, which means that $\nsb(\pi)=2$. The opener $5$ contributes also to the statistic $\rlb$, so we have $\rlb(\pi)=1$. Now let us consider the elements inside the blocks. In order to produce strictly increasing sequences of elements inside the blocks the elements $3,8,9$ should be moved to the right as following $\pi=238/147/69/5$, which means that $\nse(\pi)=3$. Let us evaluate the value of the statistic $\rle$ on the partition $\pi$. The right-to-left elements are $7,4$, which means $\rle(\pi)=2$.
\end{example}

Let us denote by $\LLP_{n,m}$ a set of all list of lists partitions of the set $[n]$ into $m$ blocks, and by 
$\Sfun(n,m;\pv)$ a generating function for partitions $\pi\in\LLP_{n,m}$ w.r.t. the statistics $\nsb$, $\nse$, $\rlb$, and $\rle$, that is
\begin{align}
\Sfun(n,m;\pv)=\sum_{\pi\in\LLP_{n,m}}\pa^{\rlb(\pi)}\pb^{\nsb(\pi)}\pc^{\rle(\pi)}\pd^{\nse(\pi)}.\label{Sfun}
\end{align}
Obviously, we have $\Sfun(1,1;\pv)=1$. Moreover, it follows immediately from \eqref{Sfun} that $\Sfun(n,m;\pv)$ is defined 
to be a polynomial in variables $\pa,\pb,\pc,\pd$ with positive coefficients for all positive integer $n$ with $1\leq m\leq n$ and
zero otherwise. 

\begin{theorem} \label{thm4-2}
For all $m,n\in\mathbb{N}$ such that $n\geq m\geq 1$, 
we have
\begin{align}
\Sfun(n,m;\pv)=
(\pa+(m-1)\pb)\Sfun(n-1,m-1;\pv)+(m\pc+(n-1)\pd)\Sfun(n-1,m;\pv),
\label{Sfunrec} 
\end{align}
with an initial condition $\Sfun(1,1;\pv)=1$.
\end{theorem}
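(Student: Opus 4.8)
The plan is to prove \eqref{Sfunrec} combinatorially by inserting the largest element $n$ into a partition of $[n-1]$ and tracking how each of the four statistics responds. Deleting $n$ from a partition $\pi\in\LLP_{n,m}$ produces a partition of $[n-1]$, and since $n$ is the maximum it is never the opener of its block; hence deletion leaves every block opener unchanged. Two cases arise: either $\{n\}$ is a singleton block of $\pi$, in which case its removal yields a partition in $\LLP_{n-1,m-1}$, or $n$ lies in a block of size at least two, in which case its removal yields a partition in $\LLP_{n-1,m}$. Reversing this, every $\pi\in\LLP_{n,m}$ is obtained uniquely either by adjoining a new singleton block $\{n\}$ to some $\pi'\in\LLP_{n-1,m-1}$ or by inserting $n$ into a block of some $\pi'\in\LLP_{n-1,m}$. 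I would organize the proof as a sum over these two constructions, reading off the monomial weight contributed at each insertion position; the two identities $\nsb(\pi)+\rlb(\pi)=m-1$ and $\nse(\pi)+\rle(\pi)=n-m$ noted before the theorem will let me convert statements about $\rlb,\rle$ into statements about $\nsb,\nse$ at no cost.

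First I would treat the new-block case. Given $\pi'\in\LLP_{n-1,m-1}$ there are $m$ places to insert the block $\{n\}$ into the ordered list of $m-1$ blocks. Because $n$ exceeds every other opener, its presence never alters whether another opener is a right-to-left minimum, so the $\rlb$ and $\nsb$ contributions of the old blocks are preserved; and the new opener $n$ is itself a right-to-left minimum exactly when no block follows it, i.e.\ only in the last position. Thus placing $\{n\}$ last adds $1$ to $\rlb$ (weight $\pa$), while each of the remaining $m-1$ positions adds $1$ to $\nsb$ (weight $\pb$) via $\nsb+\rlb=m-1$. Since $\{n\}$ is a singleton it contributes nothing to $\nse$ or $\rle$. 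Summing over positions gives the factor $\pa+(m-1)\pb$, hence the term $(\pa+(m-1)\pb)\Sfun(n-1,m-1;\pv)$.

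Next I would treat insertion into an existing block. If the blocks of $\pi'\in\LLP_{n-1,m}$ have sizes $\ell_1,\dots,\ell_m$ with $\sum_i\ell_i=n-1$, then inserting $n$ offers $\sum_{i=1}^m(\ell_i+1)=(n-1)+m$ positions. As $n$ is the maximum it is never an opener, so all openers are unchanged and $\rlb,\nsb$ are unaffected. Inside whichever block receives $n$, the element $n$ is a right-to-left minimum precisely when placed last in that block, which occurs in exactly $m$ of the positions (one per block) and contributes weight $\pc$; in each of the other $n-1$ positions $n$ is not a right-to-left minimum and, via $\nse+\rle=n-m$, adds $1$ to $\nse$ with weight $\pd$. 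Because $n$ is larger than every element of its block, it changes no other element's right-to-left-minimum status nor the block minimum. Summing over positions gives the factor $m\pc+(n-1)\pd$ and the term $(m\pc+(n-1)\pd)\Sfun(n-1,m;\pv)$.

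Adding the two contributions yields \eqref{Sfunrec}, and the base case $\Sfun(1,1;\pv)=1$ is immediate. The main obstacle is the verification, in both cases, that inserting the \emph{largest} element perturbs none of the statistic contributions already present: this is exactly what forces the choice of $n$ rather than an arbitrary element, since the maximum can never be a block opener and can never obstruct an existing right-to-left minimum. Once that invariance is established, the per-position weights above are forced and the recurrence follows.
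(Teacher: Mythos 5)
Your proof is correct and takes essentially the same approach as the paper's: insert the maximal element $n$ into a partition of $[n-1]$, split into the new-singleton-block case (yielding the factor $\pa+(m-1)\pb$) and the insertion-into-an-existing-block case (yielding the factor $m\pc+(n-1)\pd$), and count insertion positions by their effect on the four statistics. The only cosmetic difference is that you assign the $\pb$ and $\pd$ weights at non-final positions via the identities $\nsb(\pi)+\rlb(\pi)=m-1$ and $\nse(\pi)+\rle(\pi)=n-m$, whereas the paper argues the increment of $\nsb$ or $\nse$ directly.
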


\begin{proof}
The initial condition is easily obtained from \eqref{Sfun} by considering the only partition of the set $[1]$. For such partition $\pi$ we have $\rlb(\pi)=\nsb(\pi)=\rle(\pi)=\rlb(\pi)=0$, and so $\Sfun(1,1,\pv)=1$.
To obtain $\Sfun(n,m;\pv)$ for some $n$, $m$ we need to partition the set $[n]$ into $m$ blocks. Let us assume that elements of the set $[n-1]\subset[n]$ are already partitioned into ordered blocks and we need to arrange only the maximal element $n$.  There are two possibilities: $[n-1]$ is partitioned either into $m$ blocks or into $m-1$ blocks.

If we already have $m$ blocks then the element $n$ should be inserted into one of the existing blocks. Since $n$ is the maximal element of the set $[n]$, such insertion will not change the openers of the blocks and, thus will not change statistics $\nsb$ and $\rlb$. 
Inserting $n$ before any element inside a block will increase statistic $\nse$ by 1, and there are $n-1$ possible places. From another side, since $n$ is maximal, such insertion will not change the statistic $\rle$. If we insert $n$ as the last element of some block, then the statistic $\nse$ is not changed, while the statistic $\rle$ is increased by 1. We have exactly $m$ places to insert $n$ as the last element of a block. The number of partitions of $[n-1]$ into $m$ blocks w.r.t. the statistics $\nsb$, $\nse$, $\rlb$, and $\rle$ is given by $\Sfun(n-1,m;\pv)$. Summarizing this case, we obtain 
$(m\pc+(n-1)\pd)\Sfun(n-1,m;\pv)$.

If the set $[n-1]$ is partitioned into $m-1$ blocks then the element $n$ should be inserted as a new block whose opener is $n$ itself, and, thus, it will be the maximal opener among all $m$ openers.
Inserting block containing $n$ before any of the existing $m-1$ blocks will increase the statistic $\nsb$ by 1, but will not change the statistic $\rlb$. We have $m-1$ possible positions for such insertion. On another hand, inserting $n$ as the last block will not change the statistic $\nsb$, but will increase the statistic $\rlb$ by 1. Obviously we have only one possible position for this insertion. The number of partitions of the set $[n-1]$ into $m-1$ blocks w.r.t. the statistics $\nsb$, $\nse$, $\rlb$, and $\rle$ is given by $\Sfun(n-1,m-1;\pv)$. Summarizing this case, we obtain 
$(\pa+(m-1)\pb)\Sfun(n-1,m-1;\pv)$.

Collecting two possibilities completes the proof.
\end{proof}

Comparing the recurrence relation of Proposition \ref{prop2-9} with the recurrence relation of Theorem \ref{thm4-2}, we can conclude that the function \eqref{Ffun} corresponds to the combinatorial model described in the Section \ref{Section-Comb}

\begin{theorem}\label{thm-expl}
\begin{align}
[\pa^{k-1-i}  
\pb^{i}\pc^{n-k-j}\pd^{j}]\Sfun(n,k;\pv) 
&
=\left|\left\{\pi\in\LLP_{n,k}\left|\begin{array}{l} \rlb(\pi)=k-1-i,\\ \nsb(\pi)=i,\\\rle(\pi)=n-k-j,\\ \nse(\pi)=j\end{array}\right.\right\}\right|\nn\\
&=\sone{n}{n-j}\stwo{n-j}{k}\sone{k}{k-i}.\nn
\end{align}
\end{theorem}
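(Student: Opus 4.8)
The first equality is essentially a matter of reading off \eqref{Sfun}. Since every $\pi\in\LLP_{n,k}$ satisfies $\nsb(\pi)+\rlb(\pi)=k-1$ and $\nse(\pi)+\rle(\pi)=n-k$, the monomial $\pa^{\rlb(\pi)}\pb^{\nsb(\pi)}\pc^{\rle(\pi)}\pd^{\nse(\pi)}$ equals $\pa^{k-1-i}\pb^{i}\pc^{n-k-j}\pd^{j}$ exactly when $\nsb(\pi)=i$, $\rlb(\pi)=k-1-i$, $\nse(\pi)=j$, and $\rle(\pi)=n-k-j$; so the indicated coefficient is precisely the cardinality in the first line. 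The real content is the second equality, which I would establish by a counting argument whose three factors match the three Stirling numbers.

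The first step is to factor $\Sfun$. Each $\pi\in\LLP_{n,k}$ is the datum of an unordered family $L$ of $k$ nonempty linearly ordered blocks whose supports partition $[n]$, together with a linear arrangement of these $k$ blocks. The statistics $\rle,\nse$ read only the interiors of the blocks and so depend on $L$ alone, whereas $\rlb,\nsb$ read only the sequence of openers and so depend on the arrangement alone; moreover, as the arrangement ranges over all $k!$ orderings of a fixed $L$, the relative order of the (distinct) openers runs over every permutation of $[k]$ exactly once. Hence
\begin{align}
\Sfun(n,k;\pv)=\left(\sum_{\sigma}\pa^{\rlb(\sigma)}\pb^{\nsb(\sigma)}\right)\left(\sum_{L}\pc^{\rle(L)}\pd^{\nse(L)}\right),\nn
\end{align}
the first sum over permutations $\sigma$ of $[k]$ and the second over set partitions of $[n]$ into $k$ linearly ordered blocks.

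For the arrangement factor, $\rlb(\sigma)$ is one less than the number of right-to-left minima of $\sigma$ (the global minimum always being one), so $\nsb(\sigma)=i$ means $\sigma$ has $k-i$ right-to-left minima; by the classical equidistribution of right-to-left minima with cycles, there are $\sone{k}{k-i}$ such $\sigma$. For the content factor I would work block by block: inside a block $w$ on a set $S$ one has $\nse(w)=|S|-(\text{number of right-to-left minima of }w)$, so the linear orders of $S$ with $\nse(w)=j_\ell$ number $\sone{|S|}{|S|-j_\ell}$, equivalently the number of permutations of $S$ with $|S|-j_\ell$ cycles. Summing over set partitions $P=\{S_1,\dots,S_k\}$ of $[n]$ and over $j_1+\cdots+j_k=j$,
\begin{align}
\#\{L:\nse(L)=j\}=\sum_{P}\ \sum_{j_1+\cdots+j_k=j}\ \prod_{\ell=1}^{k}\sone{|S_\ell|}{|S_\ell|-j_\ell},\nn
\end{align}
and after replacing each factor by the corresponding count of permutations of $S_\ell$, this becomes the number of tuples of block-permutations with $n-j$ cycles in total. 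Assembling these into a single permutation $\rho$ of $[n]$ and recording which block each cycle belongs to is a bijection onto pairs consisting of a permutation $\rho$ of $[n]$ with $n-j$ cycles and a partition of its $n-j$ (distinguishable) cycles into $k$ nonempty groups; these are counted by $\sone{n}{n-j}$ and $\stwo{n-j}{k}$ respectively. Thus the content factor contributes $\sone{n}{n-j}\stwo{n-j}{k}$ to the coefficient of $\pc^{n-k-j}\pd^{j}$ (recall $\rle(L)=n-k-\nse(L)$), and multiplying by the arrangement factor $\sone{k}{k-i}$ and extracting the coefficient of $\pa^{k-1-i}\pb^{i}\pc^{n-k-j}\pd^{j}$ gives the claim.

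The main obstacle is the content factor: one must make the within-block passage from orderings to permutations-with-cycles compatible across blocks and verify carefully that grouping the cycles of the assembled permutation $\rho$ is a genuine bijection onto pairs (set partition of $[n]$ into $k$ blocks, linear order on each block); the arrangement factor and the bookkeeping around $\nsb+\rlb=k-1$, $\nse+\rle=n-k$ are routine. As a purely algebraic alternative, one can instead extract the coefficient of $\pa^{k-1-i}\pb^{i}\pc^{n-k-j}\pd^{j}$ from the recurrence \eqref{Sfunrec}, obtaining a four-term recurrence for the left-hand side, and then verify that the product $\sone{n}{n-j}\stwo{n-j}{k}\sone{k}{k-i}$ satisfies it by invoking the standard recurrences for the Stirling numbers of both kinds, matched against the base case $\Sfun(1,1;\pv)=1$.
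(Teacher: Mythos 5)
Your proof is correct, and it takes a genuinely different route from the paper's. The paper disposes of this theorem in one line: having observed (by comparing the recurrences of Proposition \ref{prop2-9} and Theorem \ref{thm4-2}) that $\Sfun(n,k;\pv)$ coincides with the coefficient $\Uc{n}{k}{\pv}=[x^k]\Uxp{n}{\pv}$ of the unified generating function \eqref{Ffun}--\eqref{GF:UGF}, it simply quotes Theorem~2.7 of \cite{Herscovici2019}, where the Stirling-product formula for exactly these coefficients was already established. Your argument never touches $\Ffun$ or the polynomials $\Uxp{n}{\pv}$: it works directly with the definition \eqref{Sfun}, factoring $\Sfun(n,k;\pv)$ into an arrangement factor (since $\rlb,\nsb$ depend only on the standardized permutation of openers) times a content factor (since $\rle,\nse$ depend only on the unordered family of linearly ordered blocks), then using the classical equidistribution of right-to-left minima with cycles for the factor $\sone{k}{k-i}$, and your cycle-grouping bijection for the factor $\sone{n}{n-j}\stwo{n-j}{k}$. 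The key steps all check out: the factorization is legitimate because for a fixed unordered family the opener sequence runs over all $k!$ relative orders exactly once; the correspondence between pairs (set partition of $[n]$ into $k$ blocks, a permutation of each block) and pairs (a permutation of $[n]$, a partition of its cycles into $k$ nonempty groups) is a genuine bijection; and your algebraic fallback also works, since extracting coefficients from \eqref{Sfunrec} yields a four-term recurrence that the product $\sone{n}{n-j}\stwo{n-j}{k}\sone{k}{k-i}$ satisfies via three successive applications of the standard recurrences for the two kinds of Stirling numbers, with matching initial condition. What your route buys is a self-contained, bijective explanation of why the answer is a product of three Stirling numbers, independent of the external reference; what the paper's route buys is brevity and the structural point, central to the paper, that the nine-kind unified function has the same coefficients as the earlier four-kind one.
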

\begin{proof}
It follows from \eqref{Ffun}-\eqref{GF:UGF} and the Theorem~2.7 in \cite{Herscovici2019}.
\end{proof}

Now we can give the explicit form for the coefficients of our unified generating function \eqref{GF:UGF}
\begin{theorem} For $\pa,\pb,\pc,\pd$ such that 
$\pa\pb=0$ implies that either $\pa=0,\pb\neq0$ or $\pa\neq0,\pb=0$, and $\pc\pd=0$ implies that either $\pc=0,\pd\neq0$ or $\pc\neq0,\pd=0$, we have
\begin{align*}
\frac{1}{\pa}\pexp_\frac{\pb}{\pa}&\left[\pa x\left(\frac{1}{\pc}\pexp_\frac{\pd}{\pc}(\pc t)-\frac{1}{\pc}\right)\right]-\frac{1}{\pa}+\frac{\delta_{\pa,1}}{\pa}\\
&=\sum_{n=0}^\infty\sum_{k=1}^n\sum_{i=0}^{k-1}\sum_{j=0}^{n-k}\sone{n}{n-j}\stwo{n-j}{k}\sone{k}{k-i}\pa^{k-1-i}\pb^i\pc^{n-k-j}\pd^{j}x^k\frac{t^n}{n!}.
\end{align*}
\end{theorem}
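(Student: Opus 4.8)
\emph{Approach.} The plan is to recognize the left-hand side as the generating function $\Ffun(x,t;\pv)$ of \eqref{Ffun} and to reduce the identity, coefficient by coefficient, to the equality $\Uc{n}{k}{\pv}=\Sfun(n,k;\pv)$. First I would apply \eqref{GF:UGF} followed by the expansion \eqref{Upol} to write
\begin{align}
\Ffun(x,t;\pv)=\sum_{n=0}^\infty\Uxp{n}{\pv}\tn{n}=\sum_{n=0}^\infty\sum_{k=1}^n\Uc{n}{k}{\pv}\,x^k\tn{n}.\nn
\end{align}
Extracting the coefficient of $x^k\tn{n}$ then shows that the asserted identity is equivalent to
\begin{align}
\Uc{n}{k}{\pv}=\sum_{i=0}^{k-1}\sum_{j=0}^{n-k}\sone{n}{n-j}\stwo{n-j}{k}\sone{k}{k-i}\pa^{k-1-i}\pb^{i}\pc^{n-k-j}\pd^{j}\nn
\end{align}
for all $n\geq1$ and $1\leq k\leq n$.

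Next I would identify the right-hand side of the last display with the combinatorial generating function $\Sfun(n,k;\pv)$. By \eqref{Sfun}, $\Sfun(n,k;\pv)$ is supported precisely on the monomials $\pa^{k-1-i}\pb^{i}\pc^{n-k-j}\pd^{j}$ with $0\leq i\leq k-1$ and $0\leq j\leq n-k$, and Theorem~\ref{thm-expl} states that the coefficient of each such monomial is exactly $\sone{n}{n-j}\stwo{n-j}{k}\sone{k}{k-i}$. Summing over $i$ and $j$ therefore reproduces $\Sfun(n,k;\pv)$, so the problem reduces to proving $\Uc{n}{k}{\pv}=\Sfun(n,k;\pv)$ for all $1\leq k\leq n$.

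The heart of the argument is an induction on $n$ exploiting the coincidence of recurrences flagged just after Theorem~\ref{thm4-2}. Adopting the conventions $\Uc{n}{0}{\pv}=0$ and $\Uc{n}{m}{\pv}=0$ for $m>n$ --- the latter justified by Proposition~\ref{prop-degrees} together with Lemma~\ref{lemma_zero-term} --- the three cases of Proposition~\ref{prop2-9} merge into the single three-term recurrence
\begin{align}
\Uc{{n+1}}{m}{\pv}=(\pa+(m-1)\pb)\Uc{n}{{m-1}}{\pv}+(m\pc+n\pd)\Uc{n}{m}{\pv}.\nn
\end{align}
Replacing $n$ by $n+1$ in Theorem~\ref{thm4-2} and using that $\Sfun$ vanishes outside $1\leq m\leq n$ yields precisely the same recurrence for $\Sfun(n+1,m;\pv)$. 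Since the two families agree at the base case $\Uc{1}{1}{\pv}=\Sfun(1,1;\pv)=1$ and both vanish off $1\leq k\leq n$, induction on $n$ gives $\Uc{n}{k}{\pv}=\Sfun(n,k;\pv)$ throughout.

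The only delicate bookkeeping, which I expect to be the main (if minor) obstacle, lies at the boundary indices $k=1$ and $k=n+1$: one must check that the piecewise clauses of Proposition~\ref{prop2-9} genuinely coincide with the uniform recurrence of Theorem~\ref{thm4-2} once off-range values are zeroed, i.e. that the spurious terms $(\pa+(m-1)\pb)\Sfun(n,0;\pv)$ and $((n+1)\pc+n\pd)\Sfun(n,n+1;\pv)$ drop out. This is immediate from the stated support of $\Sfun$. Finally, the constant ($n=0$) term carrying $\delta_{\pa,1}/\pa$ lies outside the range $k\geq1$ and so never participates in the coefficient comparison.
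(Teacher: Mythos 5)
Your proposal is correct and takes essentially the same route as the paper, whose own (largely implicit) argument is exactly this: the remark following Theorem~\ref{thm4-2} identifies the recurrence of Proposition~\ref{prop2-9} with that of Theorem~\ref{thm4-2}, giving $\Uc{n}{k}{\pv}=\Sfun(n,k;\pv)$, after which Theorem~\ref{thm-expl} supplies the explicit coefficients. Your write-up simply makes explicit the induction on $n$, the boundary conventions, and the coefficient extraction that the paper leaves unstated.
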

In next Section we consider some applications of our results.
\section{Applications} \label{sec-app}
Let's start from considering some special cases of the parameters $\alpha$, $\beta$, $\lambda$, and $\mu$. Note that the cases 1--5 were considered in \cite{Herscovici2017, Herscovici2019}. All sequence numbers presented in the table are the numbers of corresponding sequences in On-Line Encyclopedia of Integer Sequences \cite{OEIS}.

{\renewcommand{\arraystretch}{2}%

\begin{longtable}{|c|c|c|c|c|c|p{0.3\textwidth}|}
\caption{Special values of parameters $\pa,\pb,\pc,\pd$ and generating functions of set partitions.}\label{longtable}\\
  \hline
   Case & $\alpha$  & $\beta$ & $\lambda$ & $\mu$ & $F(x,t;\pv)$ & Description\\
  \hline
  1 & 1 & $\beta$ & 1&  $\mu$ & $e_{\beta}[x(e_{\mu}(t)-1)]$ &  Enumeration of set partitions w.r.t. statistics $\nsb$ and $\nse$, see \cite{Herscovici2017, Herscovici2019}
 \\
 \hline 
   2 & 1 & 0 & 1&  0 & $e[x(e^t-1)]$ & Sets of sets \cite{Herscovici2017, Herscovici2019, Motzkin1971},
   sequence \href{https://oeis.org/A008277}{A008277}\\
  \hline
     3 & 1 & 0 & 1&  1 & $e\left(\frac{xt}{1-t}\right)$ & Lists of sets \cite{Herscovici2017, Herscovici2019, Motzkin1971}\\
  \hline
     4 & 1 & 1 & 1&  0 & $\frac{1}{1-x(e^t-1)}$ & Sets of lists \cite{Herscovici2017, Herscovici2019, Motzkin1971}, sequence \href{https://oeis.org/A019538}{A019538}\\
  \hline
     5 & 1 & 1 & 1&  1 & $\frac{1}{1-\frac{xt}{1-t}}$ & Lists of lists \cite{Herscovici2017, Herscovici2019, Motzkin1971}\\
  \hline
     6 & $\alpha$ & 1 & $\lambda$ &  1 & $-\log_{\alpha}[1+x\log_{\lambda}(1-t)]+\frac{\delta_{\alpha_1}}{\alpha}$ & Enumeration of set partitions with cyclically ordered blocks and cyclically ordered elements inside the blocks w.r.t. statistics $\rlb$ and $\rle$ \\
  \hline 
     7 & 0 & 1 & 0 &  1 & $-\log[1+x\log(1-t)]$ & Set partitions where both blocks and elements inside the block are cyclically ordered, sequence \href{https://oeis.org/A188881}{A188881}\\
  \hline  
     8 & 0 & 1 & 1 &  1 & $-\log[1-\frac{xt}{1-t}]$ & Set partitions with cyclically ordered blocks and ordered elements inside the blocks\\
  \hline
     9 & 1 & 1 & 0 &  1 & $\frac{1}{1+x\log(1-t)}$ & Set partitions with ordered blocks and cyclically ordered elements inside the blocks\\
  \hline
     10 & $\alpha$ & 1 & 1 &  $\mu$ & $-\log_{\alpha}[1-x(e_{\mu}(t)-1)]+\frac{\delta_{\alpha_1}}{\alpha}$ & Enumeration of set partitions with cyclically ordered blocks and ordered elements inside the blocks w.r.t. statistics $\rlb$ and $\nse$\\
  \hline
     11 & 0 & 1 & 1 &  0 & $-\log[1-x(e^t-1)]$ & Set partitions with cyclically ordered blocks and unordered elements inside the blocks\\
  \hline
     12 & 1 & $\beta$ & $\lambda$ &  1 & $e_{\beta}[-x\log_{\lambda}(1-t)]$ & Enumeration of set partitions with ordered blocks and cyclically ordered elements inside the blocks w.r.t. statistics $\nsb$ and $\rle$\\
  \hline
     13 & 1 & 0 & 0 &  1 & $(1-t)^{-x}$ & Set partitions with unordered blocks and cyclically ordered elements inside the blocks, sequence \href{https://oeis.org/A130534}{A130534}\\
  \hline
\end{longtable}}

\begin{example} Let us consider the case 6 from the Table \ref{longtable}. It corresponds to very certain values of the variables $\pb, \pd$, namely $\pb=\pd=1$, while there are no values assigned to the variables $\pa,\pc$. The corresponding to this case generating function is
\begin{align}
-\log_{\alpha}[1+x\log_{\lambda}(1-t)]+\frac{\delta_{\alpha_1}}{\alpha}=\sum_{n=0}^\infty\Uxp{n}{\pa,1,\pc,1}\tn{n}.
\end{align}
The first few polynomials $\Uxp{n}{\pa,1,\pc,1}$ are
\begin{align*}
\Uxp{1}{\pa,1,\pc,1}&=x,\\
\Uxp{2}{\pa,1,\pc,1}&=(1+\pc) x+(1+\pa) x^2
\\
\Uxp{3}{\pa,1,\pc,1}&=(2+3\pc+\pc^2)x+3(1+\pa+\pc+\pa\pc) x^2
+(2+3\pa+\pa^2) x^3\\
\Uxp{4}{\pa,1,\pc,1}&=(6+11\pc+6\pc^2+\pc^3)x+(1+\pa)(11+18\pc+7\pc^2)x^2\\
&+6(2+3\pa+\pa^2)(1+\pc)x^3+(6+11\pa+6\pa^2+\pa^3)x^4.
\end{align*}
Combinatorially, we enumerate set partitions where both blocks and elements inside the blocks are cyclically ordered w.r.t the statistics $\rlb$ and $\rle$. 
To classify a partition, we are allowed to start from any block and any element inside the blocks keeping the clockwise direction. Therefore the partition can be considered as ``in-line''  notation. Now we have $[\pa\pc^2 x^3]\Uxp{4}{\pa,1,\pc,1}=7$. It means there are exactly 6 partitions into 3 blocks with 1 right-to-left block and 2 right-to-left elements, namely
\begin{align*}
1\tilde{2}/\underline{3}\tilde{4},\quad 1\tilde{3}/\underline{2}\tilde{4},\quad 1\tilde{4}/2\tilde{3},\quad 1/\underline{2}\tilde{3}\tilde{4},\\
1\tilde{3}\tilde{4}/\underline{2},\quad1\tilde{2}\tilde{4}/\underline{3},\quad 1\tilde{2}\tilde{3}/\underline{4}.
\end{align*}
Here by $\tilde{2}$ we denote that $2$ is right-to-left element, and by $\underline{4}$ we denote that $4$ is right-to-left block. Applying the Theorem \ref{thm-expl} with $n=4, k=2, i=1, j=0$,  we obtain also
\begin{align}
[\pa^{1} 
\pc^{2}]\Sfun(n,k;\pv)
&=\sone{4}{4}\stwo{4}{2}\sone{2}{1}=7.\nn
\end{align}
\end{example}

\begin{remark}
The Table \ref{longtable} is written in terms of the set partitions. However, the case 13 is equivalent to the enumeration of permutations w.r.t. the number of cycles.
\end{remark}

\textbf{Acknowledgement}.
The author was supported by the NSF CAREER DMS-1753260. The author would like to thank Alan Sokal for interesting and encouraging conversations.

\end{document}